\DeclareMathAlphabet{\mathpzc}{OT1}{pzc}{m}{it}
\let\oldtocsection=\tocsection
\let\oldtocsubsection=\tocsubsection
\let\oldtocsubsubsection=\tocsubsubsection
\renewcommand{\tocsection}[2]{\hspace{0em}\oldtocsection{#1}{#2}}
\renewcommand{\tocsubsection}[2]{\hspace{1em}\oldtocsubsection{#1}{#2}}
\renewcommand{\tocsubsubsection}[2]{\hspace{2em}\oldtocsubsubsection{#1}{#2}}
\newtheorem{bigthm}{Theorem}
\newtheorem{thm}{Theorem}[section]
\newtheorem{lem}[thm]{Lemma}
\newtheorem{prop}[thm]{Proposition}
\newtheorem{cor}[thm]{Corollary}
\theoremstyle{definition}
\theoremstyle{remark}
\newtheorem{rem}[thm]{Remark}
\newtheorem{construction}[thm]{Construction}
\newcommand{\pref}[2]{\hyperref[#2]{#1 \ref*{#2}}}
\newcommand{\id}{\ensuremath{\operatorname{id}}}
\newcommand{\Spin}{\ensuremath{\operatorname{Spin}}}
\newcommand{\Diff}{\ensuremath{\operatorname{Diff}^{\scaleobj{0.8}{+}}}}
\newcommand{\Diffuo}{\ensuremath{\operatorname{Diff}}}
\newcommand{\BDiff}{\ensuremath{\operatorname{BDiff}^{\scaleobj{0.8}{+}}}}
\newcommand{\BDiffuo}{\ensuremath{\operatorname{BDiff}}}
\newcommand{\ind}{\mathrm{ind}}
\newcommand{\catsingle}[1]{\ensuremath{\mathcal{#1}}}
\newcommand{\cA}{\ensuremath{\catsingle{A}}}
\newcommand{\cL}{\ensuremath{\catsingle{L}}}
\newcommand{\Maps}{\ensuremath{\operatorname{Maps}}}
\newcommand{\ra}{\rightarrow}
\newcommand{\too}{\longrightarrow}
\newcommand{\congarrow}{\overset{\cong}\longrightarrow}
\newcommand{\embeds}{\hookrightarrow}
\newcommand{\xra}[1]{\xrightarrow{#1}}
\newcommand{\actson}{\curvearrowright}
\DeclarePairedDelimiter{\scpr}{\langle}{\rangle}
\newcommand{\calR}{\mathcal{R}}
\newcommand{\calA}{\mathcal{A}}
\newcommand{\calH}{\mathcal{H}}
\newcommand{\calF}{\mathcal{F}}
\newcommand{\bbN}{\mathbb{N}}
\newcommand{\bbH}{\mathbb{H}}
\newcommand{\bbP}{\mathbb{P}}
\newcommand{\bbZ}{\mathbb{Z}}
\newcommand{\bbQ}{\mathbb{Q}}
\newcommand{\bbR}{\mathbb{R}}
\newcommand{\inv}{{\mathrm{inv}}}
\newcommand{\nnsec}{{\mathrm{Sec}\ge0}}
\newcommand{\prc}{{\mathrm{Ric}>0}}
\newcommand{\nnrc}{{\mathrm{Ric}\ge0}}
\newcommand{\psc}{{\mathrm{scal}>0}}
\newcommand{\nnsc}{{\mathrm{scal}\ge0}}
\begin{document}
\author{Georg Frenck}
\email{\href{mailto:georg.frenck@kit.edu}{georg.frenck@kit.edu}}
\email{\href{mailto:math@frenck.net}{math@frenck.net}}
\address{Institut f\"ur Algebra und Geometrie, Englerstr.~2, 76131 Karlsruhe, Germany}

\author{Jens Reinhold} 
\email{\href{mailto:jens.reinhold@uni-muenster.de}{jens.reinhold@uni-muenster.de}}
\address{Mathematisches Institut, Einsteinstr. 62, 48149 M{\"u}nster, Germany}

\subjclass[2010]{53C21, 57R20, 57R22, 58D17.}

\thanks{GF: I am supported by the DFG (German Research Foundation) -- 281869850 (RTG 2229). 
\newline
JR: I am supported by the DFG (German Research Foundation) -- SFB 1442 427320536, Geometry:~Deformations and Rigidity, as well as under Germany's Excellence Strategy EXC 2044 390685587, Mathematics M\"unster:~Dynamics-Geometry-Structure}

\title[Bundles with non-multiplicative $\hat{A}$-genus and spaces of metrics]{Bundles with non-multiplicative $\hat{A}$-genus and spaces of metrics with lower curvature bounds}

\begin{abstract} 
We construct smooth bundles with base and fiber products of two spheres whose total spaces have non-vanishing $\hat{A}$-genus. We then use these bundles to locate non-trivial rational homotopy groups of spaces of Riemannian metrics with lower curvature bounds for all $\Spin$-manifolds of dimension six or at least ten which admit such a metric and are a connected sum of some manifold and $S^n \times S^n$ or $S^n \times S^{n+1}$, respectively. We also construct manifolds $M$ whose spaces of Riemannian metrics of positive scalar curvature have homotopy groups that contain elements of infinite order which lie in the image of the orbit map induced by the push-forward action of the diffeomorphism group of $M$. 
\end{abstract}

\maketitle

\section{Introduction}

\noindent 
For a closed smooth manifold $M$, let $\calR(M)$ denote the space of metrics on $M$, equipped with the $C^{\infty}$-topology. Its subspace $\calR_\psc(M)$ of metrics of positive scalar curvature has been the subject of much research in recent years. For example, it has been shown by Botvinnik--Ebert--Randal-Williams in \cite{BERW} that its homotopy groups are, roughly speaking, at least as complicated as the real $\mathrm{K}$-theory of a point, provided that $M$ is $\Spin$ and of dimension at least $6$. For further results, see \cite{walsh_hspaces,crowleyschicksteimle, erw_psc2}. 
In contrast, little is known about the topology of the corresponding spaces for positive (or nonnegative) Ricci (or sectional) curvature if they are non-empty; in particular, even simple non-vanishing results for its higher rational homotopy or homology groups are scarce. 

In this paper, we detect elements of infinite order in these groups for a large class of manifolds. To state our results in the greatest generality, let $\calF(M)\subset\calR(M)$ be a $\Diffuo(M)$-invariant subset that admits a $\Diffuo(M)$-equivariant continuous map $\iota_\calF\colon\calF(M)\to\calR_\psc(M)$. Furthermore, let $W_g^{2n}\coloneqq (S^n\times S^n)^{\#g}$ denote the $g$-fold connected sum of $S^n\times S^n$, and analogously $W_g^{2n+1}\coloneqq (S^n\times S^{n+1})^{\#g}$. For a closed manifold $M$ of dimension $d$, we define the \emph{genus of $M$} to be the largest number $g$ such that there exists a manifold $N$ with $M\cong N\#W_{g}^{d}$. The term $\pi_j(X) \otimes \bbQ \neq 0$ for some space $X$ shall mean that there exists a base point $x \in X$ such that  $\pi_j(X,x) \otimes \bbQ \neq 0$. 

\begin{bigthm}\label{thm:main1}
	Let $d\ge10$ with $d\not=13$ and let $M$ be a $d$-dimensional $\Spin$-manifold of genus at least $1$. If $\calF(M)\not=\emptyset$, then either $\pi_1(\calF(M))$ is infinite or $\pi_j(\calF(M))\otimes\bbQ\not=0$ for some $2\le j\le 9$.
\end{bigthm}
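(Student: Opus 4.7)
The plan is to use the main construction of the paper---bundles $\pi\colon E\to B$ with $B$ a product of two spheres of dimension $b\le 10$, fiber $W_1^d$ (rel.\ a disk) being $S^n\times S^n$ or $S^n\times S^{n+1}$, and $\hat A(E)\ne 0$---to obstruct rational lifts through the Borel fibration for $\calF(M)$, thereby forcing non-trivial rational homotopy in the required range.

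First, since $M$ has genus at least $1$, I write $M\cong N\# W_1^d$ and extend diffeomorphisms of $W_1^d$ fixing the disk by the identity on $N\setminus\operatorname{int}(D^d)$ to obtain a homomorphism $\Diffuo(W_1^d,D^d)\to\Diffuo(M)$. Composing with the classifying map of $E\to B$ yields $\tilde\phi\colon B\to B\Diffuo(M)$ classifying an $M$-bundle $\tilde E\to B$. Since the change of fiber is a fiberwise connected sum with $N$, attaching a family of $1$-handles realizes $\tilde E$ as $\Spin$-cobordant to $E\sqcup(N\times B)$, giving
\[
\hat A(\tilde E) = \hat A(E) + \hat A(N)\hat A(B) = \hat A(E) \neq 0,
\]
where the last equality uses $\hat A(B)=0$, as $B$ admits a psc metric. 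Let $\kappa=\pi_!(\hat A(T_\pi\mathcal E))\in H^b(B\Diffuo(M);\bbQ)$ be the MMM-class of the universal $M$-bundle, so that $\hat A(\tilde E)=\langle\tilde\phi^*\kappa,[B]\rangle\ne 0$.

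Next, I claim $\kappa$ pulls back to zero along $j\colon\calF(M)\dslash\Diffuo(M)\to B\Diffuo(M)$ rationally. Given any smooth closed manifold $X$ and map $f\colon X\to\calF(M)\dslash\Diffuo(M)$, the pulled-back bundle carries a fiberwise metric in $\calF(M)$ and thus, via $\iota_\calF$, a fiberwise psc metric; a B\'erard-Bergery warped-product construction produces a psc metric on the total space, whose $\hat A$-genus therefore vanishes by Lichnerowicz. Hence $\langle f^*j^*\kappa,[X]\rangle=0$ for every such $f$, and rational Thom representability upgrades this to $j^*\kappa=0$ in $H^b(\calF(M)\dslash\Diffuo(M);\bbQ)$.

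Finally, suppose for contradiction that $\pi_1(\calF(M))$ is finite and $\pi_j(\calF(M))\otimes\bbQ=0$ for $2\le j\le 9$. Restricting to the basepoint component of $\calF(M)$, which is rationally simply connected, the rational Hurewicz theorem gives $\widetilde H_j(\calF(M);\bbQ)=0$ for $j\le 9$. The rational Serre spectral sequence of the Borel fibration $\calF(M)\to\calF(M)\dslash\Diffuo(M)\to B\Diffuo(M)$ then implies that $j_*\colon H_b(\calF(M)\dslash\Diffuo(M);\bbQ)\to H_b(B\Diffuo(M);\bbQ)$ is surjective for $b\le 10$. Writing $\tilde\phi_*[B]=j_*(x)$ and combining with $j^*\kappa=0$ gives $\hat A(\tilde E)=\langle\kappa,j_*x\rangle=\langle j^*\kappa,x\rangle=0$, a contradiction. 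The main obstacle is the vanishing of $j^*\kappa$: applying the warped-product construction uniformly over a smooth parameter manifold requires some care, and invoking rational Thom representability over an infinite-dimensional space warrants attention; a secondary subtlety is restricting to the correct component of $\calF(M)$ in the Serre spectral sequence to avoid local-coefficient complications.
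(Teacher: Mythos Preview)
Your overall architecture matches the paper's: use the bundle from \pref{Corollary}{cor:bundle-exists} to produce a nonzero $\kappa_{\hat\calA_k}$, show this class dies when pulled back to a space parametrizing fiberwise ``good'' metrics, and then run a Serre spectral sequence plus rational Hurewicz argument. However, the proposal contains a genuine gap in the step where you claim $j^*\kappa=0$.

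You argue that for any $f\colon X\to \calF(M)\dslash\Diffuo(M)$ the total space $E_X$ of the pulled-back $M$-bundle admits a psc metric, hence $\hat A(E_X)=0$, and you then assert $\langle f^*j^*\kappa,[X]\rangle=0$. But these two numbers are not equal in general: writing $\pi\colon E_X\to X$, one has
\[
\hat A(E_X)=\bigl\langle\hat\calA(TX)\cup\pi_!\hat\calA(T_\pi E_X),[X]\bigr\rangle
=\sum_{\ell\ge 0}\bigl\langle \hat\calA_\ell(TX)\cup f^*j^*\kappa_{\hat\calA_{k-\ell}},[X]\bigr\rangle,
\]
and only the $\ell=0$ summand is $\langle f^*j^*\kappa,[X]\rangle$. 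For $\dim B\ge 4$ (which occurs in most rows of the paper's table) the terms with $\ell\ge 1$ need not vanish for a general $X$, and rational Thom representability certainly does not provide stably parallelizable representatives. One can repair this by inducting on the degree of $\kappa_{\hat\calA_m}$, but the cleanest fix is what the paper actually does in \pref{Theorem}{thm:bew}: over the specific base $B$ one has a family of invertible Dirac operators on $\Pi^*E$, so the family index vanishes in $KO^{-d}$, and the cohomological family index theorem gives $\Pi^*\kappa_{\hat\calA_k}=(\mathrm{ch}_{2k}\circ c)(\Pi^*\ind(E))=0$ for each $k$ individually---no B\'erard--Bergery construction and no Thom representability are needed.

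The ``secondary subtlety'' you flag is also more serious than you suggest. Over $B\Diffuo(M)$ the local system $H_0(\calF(M);\bbQ)$ is governed by the action of the mapping class group on $\pi_0(\calF(M))$, and the splitting of the augmentation $H_0(\calF(M);\bbQ)\to\bbQ$ as $\pi_1$-modules can fail when all orbits are infinite; your surjectivity of $j_*$ then does not follow. The paper sidesteps this entirely by running the spectral sequence over the pulled-back fibration $\calR_\inv(\pi)\to B$ with $B=S^{4i-p}\times S^{4j-q}$: for every row of the table (with $d\ge 10$, $d\neq 13$) both sphere factors have dimension $\ge 2$, so $B$ is simply connected and the coefficient system is constant. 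This is exactly why $d=13$ must be excluded, and why the paper formulates the detection principle (\pref{Theorem}{thm:bew}) over a \emph{given} base rather than over $B\Diffuo(M)$.
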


\begin{rem}
\begin{enumerate}
	\item The most obvious examples for $\calF(M)$ are $\Diffuo(M)$-invariant subsets of $\calR_\psc(M)$, since the inclusion is obviously continuous and $\Diffuo(M)$-equivariant. This includes the space $\calR_\prc(M)$ of metrics of positive Ricci curvature. By applying the Ricci-flow one can show that the space $\calR_\nnsc(M)$ of metrics of nonnegative scalar curvature also admits such a map, provided that $M$ does not admit a Ricci-flat metric (cf. \pref{Proposition}{prop:ricciflow}). In this case \pref{Theorem}{thm:main1} also holds for $\Diffuo(M)$-invariant subsets of the space $\calR_\nnsc(M)$, which yields results on the spaces $\calR_\nnrc(M)$ and $\calR_\nnsec(M)$ of nonnegative Ricci and sectional curvature.
	\item One can refine the statement in this theorem so that one obtains a more precise estimate on which rational higher homotopy groups are nontrivial depending on the dimension modulo 8:
	\begin{align*}
		&d\equiv 0\ (\mathrm{mod}\;8):\quad j\in \{2,3,7\} &&	d\equiv 1\ (\mathrm{mod}\;8):\quad j\in \{2,3,6\}\\
		&d\equiv 2\ (\mathrm{mod}\;8):\quad j\in \{2,5\} &&	d\equiv 3\ (\mathrm{mod}\;8):\quad j\in \{2,4\}\\
		&d\equiv 4\ (\mathrm{mod}\;8):\quad j=3 &&	d\equiv 5\ (\mathrm{mod}\;8):\quad j\in \{2,3,4,6\}\\
		&d\equiv 6\ (\mathrm{mod}\;8):\quad j\in\{2,3,4,9\} &&	d\equiv 7\ (\mathrm{mod}\;8):\quad j\in \{2,3,4,8\}
	\end{align*} 
\end{enumerate}
\end{rem}

\begin{rem}
	Botvinnik--Ebert--Wraith derive a similar result in \cite{bew}, but in contrast to ours it neither applies to manifolds of dimension $d \equiv 6 \ (\text{mod }8)$, nor to odd-dimensional ones. We also improved the lower bound required on the dimension and genus of the manifold.
\end{rem}

\noindent 
\pref{Theorem}{thm:main1} does not apply to dimensions smaller than 10 and its assertion is weakest in dimensions $d\equiv 6\ (\mathrm{mod}\;8)$, but in those dimensions we get the following additional result that also holds in dimension 6. 

\begin{bigthm}\label{thm:main2}
	Let $d\equiv 6\ (\mathrm{mod}\;8)$ be a positive integer, and let $M$ be a $d$-dimensional $\Spin$-manifold of genus at least $1$. If $\calF(M)\not=\emptyset$, then at least one of the following is true.
\begin{enumerate}
		\item The map $\iota_\calF\colon\calF(W_1^d) \to \calR_\psc(W_1^d)$ collapses infinitely many path components to one.  
		\item $H_1(\calF(M);\mathbb Q) \neq 0$.\qedhere
	\end{enumerate}
\end{bigthm}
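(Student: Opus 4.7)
The plan rests on the bundle $\pi\colon E \to W_1^d$ with fibre $W_1^d$ and non-vanishing $\hat A$-genus on the total space, constructed in the main technical part of the paper. Lichnerowicz's formula combined with the standard submersion trick (shrinking the fibres relative to the base metric) implies that $E$ admits no fibrewise positive scalar curvature metric. Consequently, the classifying map $c\colon W_1^d \to B\Diff(W_1^d)$ does not lift along $\calR_\psc(W_1^d) \sslash \Diff(W_1^d) \to B\Diff(W_1^d)$, and hence not along $\calF(W_1^d) \sslash \Diff(W_1^d) \to B\Diff(W_1^d)$ either. By the family index theorem, the characteristic class $\hat\kappa \in H^d(B\Diff(W_1^d); \mathbb Q)$ defined as the fibre-integrated $\hat A$-class of the universal vertical tangent bundle pulls back to zero in $H^d(\calF(W_1^d) \sslash \Diff(W_1^d); \mathbb Q)$, while $c^*\hat\kappa = \hat A(E) \ne 0$.

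I proceed by contradiction, assuming both (1) and (2) fail. In the rational Serre spectral sequence of $\calF(W_1^d) \to \calF \sslash \Diff \to B\Diff(W_1^d)$, the class $\hat\kappa \in E_2^{d, 0}$ must be hit by some incoming differential $d_r\colon E_r^{d-r, r-1} \to E_r^{d, 0}$. The failure of (2) for $M$, combined with a boundary-connected-sum transfer $\calF(W_1^d) \to \calF(M)$ arising from $M \cong N \# W_1^d$ together with a chosen metric on $N$, yields $H^1(\calF(W_1^d); \mathbb Q) = 0$, killing the $d_2$-differential. The failure of (1) controls the local coefficient system on the $q = 0$ row arising from the action of $\pi_0 \Diff(W_1^d) = \pi_1 B\Diff(W_1^d)$ on $\pi_0 \calF(W_1^d)$. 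The specific construction of $E$ as assembled from mapping-torus-type building blocks along the two $S^n$-factors of $W_1^d = S^n \times S^n$ should ensure that only these contributions are relevant.

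The main obstacle is controlling the higher differentials $d_r$ for $r \ge 3$, whose sources lie in rows $q \ge 2$ of the spectral sequence and are not immediately ruled out by the failure of (1) and (2). I expect to follow an obstruction-theoretic analysis analogous to that of \cite{bew}, making essential use of the fact that $c$ factors through a map from a manifold whose rational cohomology is concentrated in degrees $0$, $n$, and $d$, and of the Postnikov structure of the orbit map $\Diff(W_1^d) \to \calF(W_1^d)$, to argue that the higher rational cohomology of $\calF(W_1^d)$ only contributes to the relevant differentials through data accessible via (1) and (2), thereby exhausting the dichotomy.
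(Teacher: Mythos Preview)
Your plan rests on a misidentification of the bundle. For $d=2n\equiv 6\pmod 8$ the paper takes $p=q=n=4k+3$ and $i=j=k+1$, so the base $S^{4i-p}\times S^{4j-q}$ is $S^1\times S^1$, not $W_1^d$; only the \emph{fibre} is $W_1^d$. Consequently the class $\kappa_{\hat\calA_{i+j}}$ lives in $H^2(S^1\times S^1;\bbQ)$, not in degree $d$, and in the Serre spectral sequence for $\calF(M)\to\calF(\pi_M)\to S^1\times S^1$ the only differential that can hit it is $d_2\colon E_2^{0,1}\to E_2^{2,0}$. Your ``main obstacle'' of higher differentials simply does not arise; what does arise is that $\pi_1(S^1\times S^1)\cong\bbZ^2$ acts on $H^0(\calF)$ via the two clutching diffeomorphisms, so the coefficient system on the $q=0$ row need not be constant.

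This is where the actual dichotomy lives, and two ingredients in your sketch do not survive. First, there is no ``boundary-connected-sum transfer'' $\calF(W_1^d)\to\calF(M)$: the hypotheses on $\calF$ impose no compatibility with connected sums (think of positive Ricci curvature), so you cannot transport the vanishing of $H^1(\calF(M);\bbQ)$ to $\calF(W_1^d)$. The paper instead glues a trivial $N$-bundle into the trivialised disk subbundle to obtain directly an $M$-bundle over the torus and runs the spectral sequence for $\calF(M)$. Second, the passage to alternative~(i) requires an external input you do not mention: \cite[Theorem~B]{Frenck} says that $\pi_0\bigl(\Diffuo(W_1^d,D^d)\bigr)$ acts on $\pi_0\bigl(\calR_\psc(W_1^d)\bigr)$ through a group of order at most~$2$. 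Hence if the $\bbZ^2$-action on $\pi_0(\calF(W_1^d))$ has an infinite orbit, the equivariant map $\iota_\calF$ must collapse it, giving~(i); otherwise one passes to a finite cover of the torus to trivialise the local system, the edge map into $E_2^{2,0}$ becomes injective, and the surviving $d_2$ forces $H^1(\calF(M);\bbQ)\neq 0$, giving~(ii).
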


\begin{rem}
	\begin{enumerate}
		\item Note that statement $(ii)$ from \pref{Theorem}{thm:main2} is stronger than $\pi_1(\calF(M))$ being infinite. It implies that there is an element of infinite order in the abelianisation of this group.
		\item For the case of positive Ricci curvature, a list of manifolds to which \pref{Theorem}{thm:main1} and \pref{Theorem}{thm:main2} are applicable is given in \cite[Corollary 1.2]{bew} or \cite[Definition 1.1]{burdick}.
	\end{enumerate}
\end{rem}

\begin{rem}\label{rem:dim13}
	For $d=13$ (and in fact for every $d\ge13$ with $d\equiv 5\ (\mathrm{mod}\;8)$) we get a result that looks like a mixture of \pref{Theorem}{thm:main1} and \pref{Theorem}{thm:main2}. In this case at least one of the following is true.
	\begin{enumerate}
		\item The map $\iota_\calF\colon\calF(W_1^d) \to \calR_{\psc}(W_1^d)$ collapses infinitely many path components to one. 
		\item $\pi_1(\calF(M))$ is infinite.
		\item $\pi_2(\calF(M))\otimes\bbQ\neq0$.\qedhere
	\end{enumerate}
\end{rem}

\noindent 
The two theorems above follow from a more general statement about the rational cohomology groups of the space $\calF(M)$ (cf. \pref{Theorem}{thm:mainprc} and \pref{Proposition}{prop:implyingpsc}). In its proof, the main new ingredient is the construction of $W_1^d$-bundles over the product of two spheres with non-vanishing  $\hat A$-genus. The most general version of our construction yields the following.

\begin{prop}\label{prop:bundleintro}
For positive integers $p,q,i,j$ such that $2j < p < 4i$ and $2i < q < 4j$, there exists a smooth bundle $E \to S^{4i-p} \times S^{4j-q}$ with fiber $S^p \times S^q$, containing a trivialized disk bundle, whose total space admits a $\Spin$-structure and has non-vanishing $\hat{A}$-genus. 
\end{prop}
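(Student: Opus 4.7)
The plan reduces to non-vanishing of a generalized Miller--Morita--Mumford (MMM) class. Set $a:=4i-p$, $b:=4j-q$ and $d:=p+q$, so that $\dim E=4(i+j)$. Since $TE$ is stably isomorphic to $T^{v}E\oplus\pi^*T(S^a\times S^b)$ and $S^a\times S^b$ is stably parallelizable, one has
\[
\hat{A}(E)=\int_{S^a\times S^b}\pi_!\hat{A}_{i+j}(T^{v}E)=\kappa_{\hat{A}_{i+j}}(E)[S^a\times S^b],
\]
where $\kappa_c(E):=\pi_!(c(T^{v}E))$ denotes the MMM-class associated with a characteristic class $c$. So it suffices to construct a smooth $\Spin$ $(S^p\times S^q)$-bundle whose top MMM-class pairs non-trivially with the fundamental class of the base, and which contains a trivialized disk sub-bundle.

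I would build such a bundle by clutching and pullback along a pinch map. Fix a smoothly embedded disk $D^d\hookrightarrow S^p\times S^q$; given a class $\alpha\in\pi_{a+b-1}(\Diffuo_\partial(D^d))$, extend $\alpha$ by the identity to obtain an element of $\pi_{a+b-1}(\Diffuo(S^p\times S^q))$ and clutch to produce a smooth $\Spin$ bundle $E(\alpha)\to S^{a+b}$ with fiber $S^p\times S^q$. Because $\alpha$ is supported in $D^d$, the bundle $E(\alpha)$ is tautologically trivial over the complementary disk in each fiber, and hence contains a trivialized disk sub-bundle. Pulling back along the standard degree-one pinch map $S^a\times S^b\to S^{a+b}$ (which is a diffeomorphism away from the wedge $S^a\vee S^b$, where $E(\alpha)$ is trivial) yields a smooth $\Spin$ bundle $E\to S^a\times S^b$ that also contains a trivialized disk sub-bundle. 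Naturality of the MMM-class combined with the pinch having degree one on top cohomology gives $\hat{A}(E)=\hat{A}(E(\alpha))$, reducing the statement to producing $\alpha$ with $\hat{A}(E(\alpha))\ne 0$.

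The main step, and the expected principal obstacle, is this last non-vanishing statement. The hypotheses $2j<p$ and $2i<q$ rearrange to the estimate $a+b<2(i+j)<d$, which is precisely what is needed to place the degree $a+b-1$ in a range where $\pi_{a+b-1}(\Diffuo_\partial(D^d))\otimes\mathbb{Q}$ is known to contain classes detectable by $\hat{A}$-type invariants of the associated bundle --- either via Farrell--Hsiang--Waldhausen--Igusa pseudoisotopy theory combined with the higher analytic torsion of Bismut--Lott, or via explicit geometric constructions in the spirit of Hanke--Schick--Steimle. Exhibiting such an $\alpha$, and verifying that its $\hat{A}$-pairing is not cancelled by lower-order cross terms from the expansion of $\hat{A}_{i+j}$ as a polynomial in the Pontryagin classes of $T^{v}E$, is the technical heart of the construction.
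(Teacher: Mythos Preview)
Your reduction to finding a class $\alpha\in\pi_{a+b-1}(\Diffuo_\partial(D^d))$ with $\hat A(E(\alpha))\ne 0$ is sound, and the $\kappa$-class / pinch-map reformulation is correct. However, the proposal stops precisely where the actual work begins: you do not exhibit such an $\alpha$. Appealing to ``Farrell--Hsiang--Waldhausen--Igusa'' is not enough, because the inequality $a+b<d$ you derive is strictly weaker than the concordance stable range (roughly $a+b\lesssim d/3$) in which that machinery applies; and the appeal to Hanke--Schick--Steimle is at best schematic --- their results produce such elements for \emph{some} pairs $(k,n)$, and you have not checked that your specific $(a+b-1,d)$ is covered for \emph{every} admissible choice of $p,q,i,j$. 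You yourself flag this step as ``the technical heart of the construction'' and then leave it undone; as written, this is a strategy, not a proof.

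The paper takes a fundamentally different and fully explicit route that avoids $\Diffuo_\partial(D^d)$ altogether. It writes $(S^p\times S^q)\setminus\mathring D^{p+q}$ as a plumbing of two trivial disk bundles and twists the two handles independently by clutching functions arising from classes in $\pi_{4i}(\BSO(q))$ and $\pi_{4j}(\BSO(p))$ with nonzero $p_i$ and $p_j$ (these exist exactly by the hypotheses $2j<p<4i$ and $2i<q<4j$). Because the twists have disjoint support, they commute and directly yield a bundle over $S^{4i-p}\times S^{4j-q}$ with fiber $(S^p\times S^q)\setminus\mathring D^{p+q}$; after pulling back along a nonzero-degree self-map of the base one can cap off the boundary sphere bundle by a disk bundle. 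The Pontryagin numbers of the resulting $E$ are then computed by hand (only $p_ip_j$ and $p_{i+j}$ can be nonzero, and $p_ip_j\ne 0$), and a short identity among the coefficients of the $\hat{\mathcal A}$- and $\mathcal L$-polynomials, combined with $\sigma(E)=0$, forces $\hat A(E)\ne 0$. In short, the paper locates the nontrivial input in $\pi_*(\BSO)$ rather than in $\pi_*(\Diffuo_\partial(D^d))$, which turns both the existence of the bundle and the nonvanishing of its $\hat A$-genus into direct calculations.
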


\noindent 
Employing these bundles in another way, we also obtain examples of manifolds $M$ for which the image of the map on homotopy groups induced by the orbit map $\Diffuo(M,D)\to\calR_\psc(M)$ (associated to the push-forward action) contains elements of infinite order. Here $\Diffuo(M,D)$ is the topological group of diffeomorphisms of $M$ fixing an embedded disk $D\subset M$ of codimension zero.

\begin{bigthm}\label{thm:main3}
	Let $i,j$ be positive integers with $|i-j|<\min(i,j)$ and $k<4j-2i-1$. Then for every $p\in\{2j+1,\dots, 4i-1\}$ there exists a fiber bundle $S^p\times S^{4j-k-1}\to M\to S^{4i-p}$ with $\calR_\psc(M)\not=\emptyset$ such that the image of the map induced by the orbit map
	$\pi_k(\Diffuo(M,D),\id)\too\pi_k (\calR_\psc(M),g)$
	contains an element of infinite order for every $g\in\calR_\psc(M)$.\end{bigthm}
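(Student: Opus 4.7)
The plan is to invoke \pref{Proposition}{prop:bundleintro} with $q \coloneqq 4j - k - 1$; one checks directly that the hypotheses $2j < p < 4i$ and $2i < q < 4j$ translate exactly to $p \in \{2j+1, \dots, 4i-1\}$ and $k < 4j - 2i - 1$. This yields a smooth bundle $E \to S^{4i-p} \times S^{k+1}$ with fiber $S^p \times S^{4j-k-1}$, spin total space of dimension $4(i+j)$, a trivialized codimension-zero disk sub-bundle, and non-vanishing $\hat{A}$-genus. Setting $M$ to be the restriction of $E$ over $S^{4i-p} \times \{\ast\}$ gives the required bundle $S^p \times S^{4j-k-1} \to M \to S^{4i-p}$, with $M$ inheriting a spin structure. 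A positive scalar curvature metric on $M$ is obtained by the standard Riemannian submersion argument: the fiber admits a product of round metrics with positive scalar curvature (both factors are spheres of dimension at least two), and fiber-shrinking promotes this to a psc metric on the total space, so $\calR_\psc(M) \neq \emptyset$.

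The second step is to reinterpret the projection $E \to S^{k+1}$ to the second factor of the base as an $M$-bundle, using that every fiber of this projection is canonically identified with $M$. Restricting the trivialized disk sub-bundle of $E$ to $D^{4i-p} \times S^{k+1}$ for an embedded disk $D^{4i-p} \subset S^{4i-p}$ exhibits a trivialized codimension-zero sub-disk $D \subset M$ preserved along the whole $M$-bundle, so $E \to S^{k+1}$ is classified by a class $\alpha \in \pi_{k+1}(B\Diffuo(M, D)) \cong \pi_k(\Diffuo(M, D), \id)$.

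It remains to show that the image of $\alpha$ under the orbit map $\Phi_g\colon \Diffuo(M, D) \to \calR_\psc(M)$, $\varphi \mapsto \varphi^*g$, is of infinite order for every $g$. I would argue by contradiction: if $(\Phi_g)_*(N\alpha) = 0$ in $\pi_k(\calR_\psc(M), g)$ for some $N \neq 0$, then the family $\{\varphi_t^*g\}_{t \in S^k}$ representing this class extends to a smooth family of psc metrics on $M$ parametrized by a hemisphere of $S^{k+1}$. Combining this with the constant family $g$ on the opposite hemisphere assembles a fiber-wise psc metric on the $M$-bundle $E_{N\alpha} \to S^{k+1}$ classified by $N\alpha$, which a standard submersion argument promotes to a psc metric on the total space $E_{N\alpha}$. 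However, the assignment sending an $M$-bundle with trivialized $D$-bundle over a sphere to the spin bordism class of its total space defines a group homomorphism $\pi_{k+1}(B\Diffuo(M, D)) \to \Omega_{4(i+j)}^{\Spin}$, so composition with $\hat{A}$ yields a homomorphism sending $N\alpha$ to $N \cdot \hat{A}(E) \neq 0$, contradicting Lichnerowicz's theorem on $E_{N\alpha}$. The main subtlety I anticipate is the verification of the homomorphism property of this spin-bordism assignment; the trivialized disk sub-bundle is precisely what allows the $N$-fold loop operation in $B\Diffuo(M,D)$ to be realized by iterated fiberwise connected sum of total spaces, after which the conclusion follows. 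All remaining ingredients---the submersion construction for psc, Lichnerowicz's theorem, the null-homotopy-to-disk-family extension---are standard.
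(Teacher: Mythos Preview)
Your overall strategy matches the paper's: take the bundle $E$ from \pref{Proposition}{prop:bundleintro}, set $M$ to be the fiber over one sphere factor, and use the contrapositive of the ``nullhomotopy in $\calR_\psc$ implies psc on the total space'' principle (the paper's \pref{Proposition}{prop:criterion}) together with $\hat A(E)\neq 0$ and Lichnerowicz. Your treatment of infinite order via the homomorphism $\pi_{k+1}(B\Diffuo(M,D))\to\Omega^{\Spin}_{4(i+j)}\xrightarrow{\hat A}\bbQ$ and your identification of the trivialized disk $D\subset M$ are correct and in fact more explicit than the paper's one-line ``follows immediately''.

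There is, however, a genuine gap in your argument that $\calR_\psc(M)\neq\emptyset$. The fiber-shrinking/O'Neill argument you invoke requires a smoothly varying \emph{fiberwise} psc metric on $M\to S^{4i-p}$, not merely a psc metric on the abstract fiber $S^p\times S^q$. The clutching diffeomorphisms $\alpha_x\colon(s,t)\mapsto(s,\hat\alpha(x)(s)\cdot t)$ are \emph{not} isometries of the product round metric (the differential has a cross term $d\hat\alpha(x)(s)[v]\cdot t$), so you cannot simply transport the round $\times$ round metric over the clutching locus. Existence of a fiberwise psc metric is equivalent to the loop $x\mapsto\alpha_x^*(g_{\mathrm{round}}\times g_{\mathrm{round}})$ being nullhomotopic in $\calR_\psc(S^p\times S^q)$, which is exactly the kind of statement you are trying to prove, not assume. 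The paper circumvents this via \pref{Lemma}{lem:psc-existence}: since $S^p\times S^q$ is simply connected and stably parallelizable, $\hat A$ is multiplicative for the bundle $M\to S^{4i-p}$ \cite[Proposition~1.9]{HankeSchickSteimle}, so $\hat A(M)=0$; then Stolz's theorem gives $\calR_\psc(M)\neq\emptyset$ when $\dim M\not\equiv 1,2\ (\mathrm{mod}\ 8)$, and in the remaining cases one first pulls back along a degree-$2$ self-map of $S^{4i-p}$ to kill the $\bbZ/2$-valued $\alpha$-invariant. You should replace your submersion sentence with this argument.
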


\begin{rem}
	Note that this gives examples for every $k\ge0$ and since $4j-k-1>2i\ge\min(p,4i-p)$ the manifold $M$ is $\min(p,4i-p)$-connected. It follows from the proof of  \pref{Theorem}{thm:main3} pertaining to $\Diffuo(M,D)$ that it is also true for $N\# M$ for \emph{any} $\Spin$-manifold $N$ of positive scalar curvature, by extending diffeomorphisms by the identity.
\end{rem}	
	
\begin{rem}
Manifolds as in \pref{Theorem}{thm:main3} were known to exist for some time by \cite[Corollary 2.6]{HankeSchickSteimle}, however their construction \enquote{is based on abstract existence results in differential topology [and] does not yield an explicit description of the diffeomorphism type of the [...] manifold} {[}loc.~cit.~p.~3{]}. 
	
Recently, Kupers, Krannich, and Randal-Williams have shown \cite{KKRW} that the image of the map $\pi_3(\Diffuo(\bbH\bbP^2), \text{id})\to\pi_3(\calR_\psc(\bbH\bbP^2),g_{\text{st}})$ contains an element of infinite order, analogous to the manifolds from above. Here, $g_{\text{st}}$ denotes the standard metric on $\bbH\bbP^2$, which is of positive sectional curvature. Their result is especially remarkable as it goes beyond the abstract result by Hanke, Schick and Steimle insofar it also gives elements in $\pi_3(\calR_\prc(\bbH\bbP^2),g_{\text{st}})$ and even $\pi_3(\calR_{\mathrm{sec}>0}(\bbH\bbP^2), g_{\text{st}})$ of infinite order. We do not know if the manifolds from \pref{Theorem}{thm:main3} admit a metric of positive Ricci curvature.
\end{rem}

\section{Bundles over products of spheres with non-vanishing $\hat{A}$-genus}
\label{sec:bundles}
\subsection{The construction of the bundle}

In this section, we construct smooth bundles whose fiber and base are products of spheres and whose total spaces have non-vanishing $\hat{A}$-genus, thereby proving \pref{Proposition}{prop:bundleintro}. Throughout the whole section, we assume that $p,q,i,j$ are positive integers such that the inequalities $2j < p < 4i$ and $2i < q < 4j$ are satisfied.

\begin{construction}\label{constr:bundle}
We build a smooth bundle 
	\[(S^p\times S^q)\setminus{\mathring{D}^{p+q}}\to E' \overset{\pi}\too S^{4i-p}\times S^{4j-q}\]
	as follows. Consider the plumbing of two trivial disk bundles, 
	\[S^{p} \times D_-^{q}  			\cup_{D_-^{p} \times D_-^{q}} D_-^{p} \times S^{q} \cong S^{p}\times S^{q} \backslash\mathring{D}^{p+q},\]
	where $D^d_{-}$ denotes the lower hemisphere of $S^d$. 
	See \pref{Figure}{fig:bundle-nonclosed} for a visualization of this manifold.
The mapping spaces $\Omega^p_{\text{sm}} SO(q)$ and $			\Omega_{\text{sm}}^q SO(p)$ of those maps \[(S^p,D^p_-) \to (SO(q),1) \quad 		\text{and} \quad (S^q,D^q_-) \to (SO(p),1)\] so that the adjoint maps $S^p \times 	D^{q} \to D^q$ and $D^p \times S^q \to D^q$ are smooth can be endowed with the 	compact-open topology and the group structure induced from point-wise composition. The adjoint maps then twist the handles of $S^{p}\times S^{q} \backslash\mathring{D}^{p+q}$, which produces two commuting actions of $\Omega^p_{\text{sm}} SO(q)$ and $\Omega_{\text{sm}}^q SO(p)$ on $S^p \times S^q \backslash\mathring{D}^{p+q}$.
	The inclusions $\Omega_{\text{sm}}^q SO(p) \subset \Omega^q SO(p)$ and $	\Omega_{\text{sm}}^p SO(q) \subset \Omega^p SO(q)$ are weak equivalences.
	We continue by picking $\alpha \in \pi_{4i}(BSO(q))$ and $\beta \in \pi_{4j} (BSO(p))$ such that $p_i(\alpha), p_j(\beta) \neq 0$, where $p_i$ denotes the $i$-th Pontryagin class \cite{MilnorStasheff} that gives rise to a homomorphisms $\pi_i BSO(d) \to \mathbb Q$. This is possible by the assumptions on $p,q,i,j$, cf.~\cite[Lemma 5]{milnor_structures}. We then choose representatives of clutching function of the corresponding vector bundles, 
$\hat\alpha\colon S^{4i-p-1}\to \Maps((S^p,D^p_-), (SO(q),1))$ and $\hat \beta\colon S^{4j-q-1}\to \Maps((S^q,D^q_-), (SO(p),1))$, respectively. By the discussion above we may assume that these maps are smooth. Finally, we define for $x\in S^{4i-p-1}$ and $y\in S^{4j-q-1}$.
	\begin{align*}
		&\alpha_x\colon S^p\times D_-^q \too S^p\times D_-^q && \alpha_x(s,t) = \hat\alpha(x)(s)\cdot t\\
		&\beta_y\colon D_-^p \times S^q \too D_-^p \times S^q && \beta_y(s,t) = \hat\beta(y)(t)\cdot s.
	\end{align*}
	Both families fix the disk $D^p_- \times D^q_-$ and hence they can be extended to $S^{p} \times D_-^{q}  \cup_{D_-^{p} \times D_-^{q}} D_-^{p} \times S^{q} \cong S^{p}\times S^{q} \backslash\mathring{D}^{p+q}$. Since they have disjoint support, they obviously commute, and $(\alpha_x)_{x\in S^{4i-p-1}}$, $(\beta_y)_{y\in S^{4j-q-1}}$ can be seen as clutching functions of $S^{p}\times S^{q} \backslash\mathring{D}^{p+q} $ with disjoint supports, so they produce a smooth bundle 
	\[(S^p\times S^q)\setminus{\mathring{D}^{p+q}}\to E' \overset{\pi}\too S^{4i-p}\times S^{4j-q}\]
which contains a trivialized $D^{p+q}$-subbundle. Also, without loss of generality we may assume that the boundary subbundle of $E'$ contains a trivial $D^{p+q-1}$-subbundle. This can be achieved by changing $\alpha$ and $\beta$ by appropriate homotopies.
\end{construction}

\begin{figure}[ht]
	\begin{center}
	\includegraphics[width=0.4\textwidth]{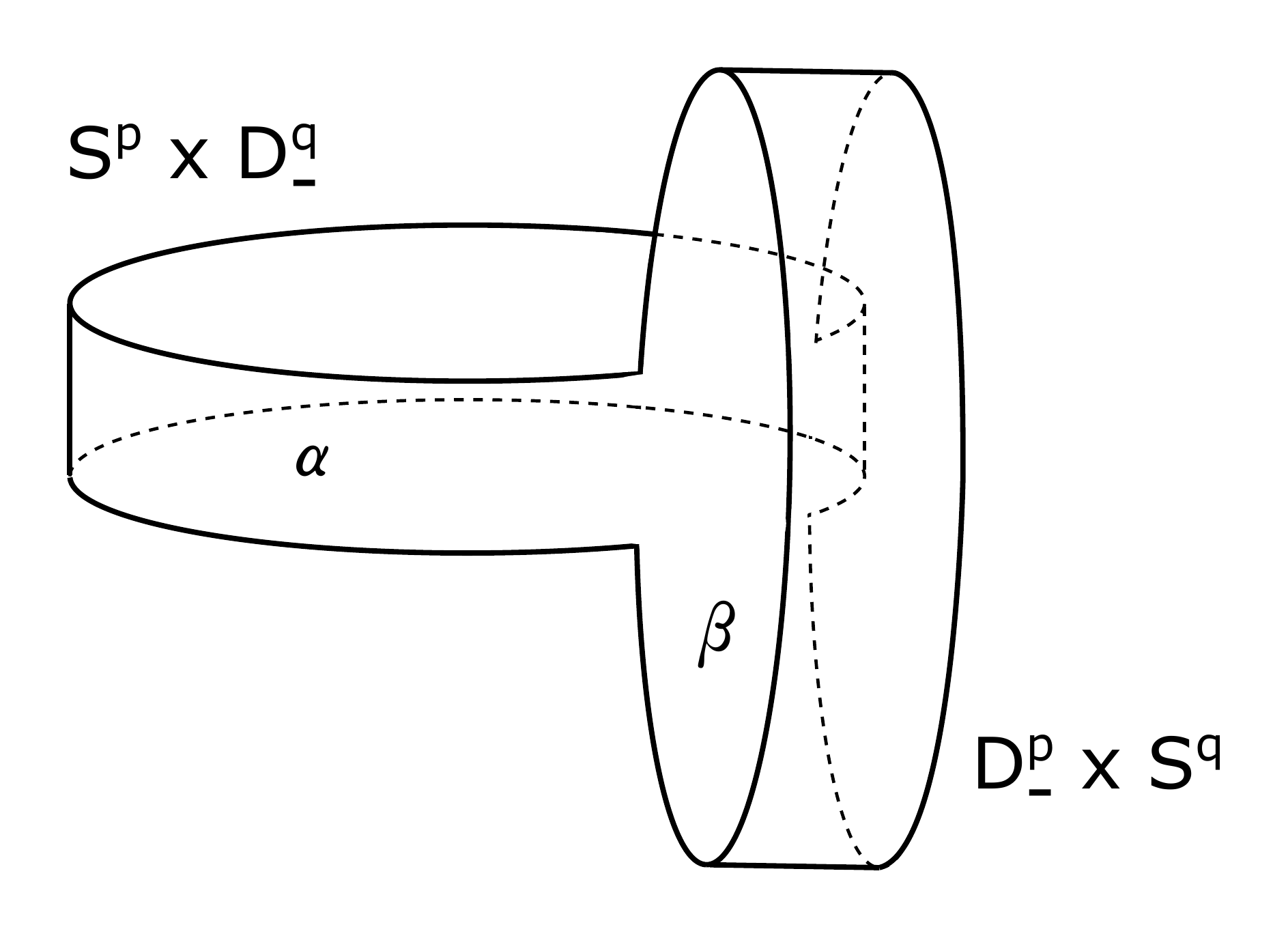}
	\caption{The manifold $S^p \times S^q \backslash\mathring{D}^{p+q}$ serves as the fiber}\label{fig:bundle-nonclosed}
	\end{center}
	\end{figure}

\begin{lem}\label{lem:closing} Let $\pi\colon X \to B$ be a smooth $(S^n,D^n)$-bundle whose base space $B = S^{i_1} \times \dots \times S^{i_r}$ is a product of spheres. Then the following holds.
\begin{enumerate}
\item If $n$ is even and $\dim(B) \leq n-5$, there exists a map $\varphi\colon B \to B$ of non-zero degree such that $\varphi^{\ast} \pi$ is trivial.
\item If $n$ is odd and $\dim(B) \leq n-3$, there exists a map $\varphi\colon B \to B$ of non-zero degree such that $\varphi^{\ast} \pi$ is the bundle of boundaries of a smooth $D^{n+1}$-bundle.
\end{enumerate}
\end{lem}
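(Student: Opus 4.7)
The strategy is to translate into classifying spaces. By excising the trivialized $D^n$-subbundle, an $(S^n, D^n)$-bundle with trivialized disk is equivalent to a $D^n$-bundle with trivialized boundary, hence is classified by a map $f\colon B\to B\Diffuo_\partial(D^n)$. In this language, (i) asks for $f\circ\varphi\simeq\ast$ for some $\varphi\colon B\to B$ of nonzero degree, while (ii) (after forgetting the disk trivialization) asks that the associated $S^n$-bundle classifying map $B\to B\Diffuo(S^n)$, precomposed with $\varphi$, lift along the \enquote{take boundary} map $B\Diffuo(D^{n+1})\to B\Diffuo(S^n)$.

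Since $B = S^{i_1}\times\cdots\times S^{i_r}$, I would take $\varphi=\varphi_1\times\cdots\times\varphi_r$ with $\varphi_s\colon S^{i_s}\to S^{i_s}$ of degree $k_s$, so $\deg\varphi=\prod_s k_s\ne 0$. Under $\varphi^*$, the restriction of $f$ to any sub-product $\prod_{s\in I}S^{i_s}$ is multiplied by $\prod_{s\in I}k_s$ on the corresponding (rational) homotopy group of the target. Hence sufficiently divisible $k_s$ kill any torsion contribution.

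The essential computational input is a rational vanishing (or finiteness) statement for the relevant homotopy groups: $\pi_*(B\Diffuo_\partial(D^n))\otimes\bbQ=0$ in the range $*\le n-5$ for $n$ even, and finiteness (in the range $*\le n-3$) of the homotopy fiber of $B\Diffuo(D^{n+1})\to B\Diffuo(S^n)$ for $n$ odd. These follow by combining Farrell--Hsiang's computation of the rational homotopy of $B\Diffuo_\partial(D^n)$ with the concordance stability theorems of Igusa and Weiss--Williams (and more recent improvements by Kupers and Krannich--Randal-Williams extending the range), together with the Kervaire--Milnor finiteness of $\Theta_{n+1}$ and standard surgery theory for the higher obstructions. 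In both cases, the cellwise restrictions of $f$ give torsion classes, killable by $\varphi^*$ for sufficiently divisible $k_s$.

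\textbf{Main obstacle.} The principal technical point is establishing the required rational vanishing / finiteness in the indicated dimension range, which for large $n$ exceeds the classical Farrell--Hsiang stable range and therefore requires a careful synthesis of the subsequent stability improvements. A secondary difficulty is assembling the cell-by-cell trivializations (or bounding bundles) into a global statement on the product-of-spheres $B$: obstructions to extending across higher cells involve iterated Whitehead products landing in secondary homotopy groups of the classifying target. These are controlled by the same rational vanishing in the relevant range, so an iterative degree-pullback procedure along the finite CW skeleton of $B$ kills them successively, while the composite $\varphi$ still has nonzero degree by construction.
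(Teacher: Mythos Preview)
Your overall strategy matches the paper's: pass to the classifying map $f\colon B\to B\Diffuo_\partial(D^n)$ and kill it (or the obstruction to bounding) by precomposing with a nonzero-degree self-map of $B$. The execution differs in two places where the paper is considerably sharper.

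First, the finiteness input you flag as the \enquote{main obstacle} is not something to be synthesized from Farrell--Hsiang plus concordance stability; that route does not reach the range $n-5$. The paper simply invokes \cite[Theorem~4.1]{RW_UpperRange}, which gives directly that $\pi_j(B\Diffuo_\partial(D^m))$ is finite for $m$ even and $j\le m-5$. This single citation dispatches what you identify as the principal difficulty.

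Second, the paper organises the obstruction theory by climbing the Postnikov tower of the target $Y$ rather than the cell structure of $B$. It isolates a clean auxiliary lemma: if $Y$ has finite $\pi_j$ for $j\le\dim B$, then any $f\colon B\to Y$ becomes null after precomposing with some nonzero-degree $\varphi\colon B\to B$. The proof lifts $f$ successively along $Y\langle j{+}1\rangle\to Y\langle j\rangle\to K(\pi_j(Y),j)$ by choosing $\varphi_j$ to annihilate the class in $H^j(B;\pi_j(Y))$; composing the $\varphi_j$ gives $\varphi$. This avoids any discussion of Whitehead products or secondary obstructions.

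For (ii), rather than forgetting the disk trivialisation and analysing the fiber of $B\Diffuo(D^{n+1})\to B\Diffuo(S^n)$, the paper stays in the relative setting and uses the fiber sequence
\[
B\Diffuo_\partial(D^{n+1}) \longrightarrow B\Diffuo(D^{n+1},D^n_-) \longrightarrow B\Diffuo_\partial(D^n_+),
\]
which deloops (via the $E_d$-structure), so the obstruction to lifting $f$ lies in $[B,\,B^2\Diffuo_\partial(D^{n+1})]$. Since $n{+}1$ is even, the same theorem of Randal-Williams gives finiteness of $\pi_*B^2\Diffuo_\partial(D^{n+1})$ for $*\le n-3$, and the auxiliary lemma applies verbatim. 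Your route is plausible but would require separately identifying the homotopy fiber and establishing its finiteness, which is extra work the paper's formulation sidesteps.
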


\noindent 
\pref{Lemma}{lem:closing}, which will be proven below, implies that we can modify the construction of the bundle above so that we obtain a bundle $S^p\times S^q\to E\to S^{4i-p}\times S^{4j-q}$ with closed fiber $S^p \times S^q$ by precomposing the classifying map of $E'$ with a self-map $\varphi$ of $S^{4i-p}\times S^{4j-q}$ of nonzero degree, and gluing in a disk bundle along the boundary of $\varphi^*E'$. This boundary is a smooth $(S^{p+q-1},D^{d+q-1}_-)$-bundle over $S^{4i-p} \times S^{4j-q}$. 

It follows from the inequalities on $p,q,i,j$ that $2i \leq q-1$ and $2j \leq p-1$. If $d = p+q-1$ is even (and thus one of $p-1, q-1$ is odd), then $2(i+j) \leq p+1-3$ and hence $\dim(S^{4i-p} \times S^{4j-q}) = 4(i+j) - (p+q) \leq p+q-6 = (d-1)-5$ holds, as (i) in \pref{Lemma}{lem:closing} demands with $n = d-1$. If $d = p+q-1$ is odd, then $\dim(S^{4i-p} \times S^{4j-q}) = 4(i+j) - (p+q) \leq p+q-4 = (d-1)-3$ holds as (ii) demands.

\noindent 
For the proof of \pref{Lemma}{lem:closing} we need the following auxiliary result.

\begin{lem} \label{lem:HomThTorus}
Let $B = S^{i_1} \times \dots \times S^{i_r}$ be a product of spheres, and let $Y$ be a connected based space such that every element in $\pi_j(Y,y)$ has finite order for $1 \leq j \leq \dim(B) = i_1 + \dots + i_r$. Then for any map $f\colon B \to Y$, there exists a map $\varphi \colon B \to B$ of non-zero degree such that $f \circ \varphi$ is nullhomotopic.
\end{lem}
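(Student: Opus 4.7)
For each $N \geq 1$, let $\varphi_N \colon B \to B$ denote the product of a chosen degree $N$ self-map of each sphere factor; this is a based map of degree $N^r \neq 0$. The K\"unneth formula gives, for any abelian group $A$,
\[
H^k(B; A) \cong \bigoplus_{\substack{S \subseteq \{1, \ldots, r\} \\ \sum_{\ell \in S} i_\ell = k}} A,
\]
and on the summand indexed by $S$ the map $\varphi_N^\ast$ acts as multiplication by $N^{|S|}$. Since $|S| \geq 1$ whenever $k \geq 1$, any torsion class in $H^k(B; A)$ is annihilated by $\varphi_N^\ast$ for a sufficiently divisible $N$.

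With this observation in hand, the main argument I propose runs along the Postnikov tower $\cdots \to Y_k \to Y_{k-1} \to \cdots \to Y_0 = \ast$ of $Y$. Because $Y \to Y_d$ (with $d = \dim B$) is $(d+1)$-connected, it suffices to find $N$ for which $f \circ \varphi_N$ becomes nullhomotopic upon projection to $Y_d$. I would prove this by induction on $k$: assuming we already have $\varphi = \varphi_{N_{k-1}}$ such that $B \xra{\varphi} B \xra{f} Y \to Y_{k-1}$ is nullhomotopic, the obstruction to nullhomotoping the further composite into $Y_k$, obtained by lifting a chosen nullhomotopy through the principal fibration $Y_k \to Y_{k-1}$ with fibre $K(\pi_k(Y), k)$, is a class in $H^k(B; \pi_k(Y))$. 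By the hypothesis on $\pi_k(Y)$ and the K\"unneth decomposition above, this class is torsion; the first paragraph then produces an $M$ for which $\varphi_M^\ast$ annihilates it, and replacing $\varphi$ by $\varphi \circ \varphi_M = \varphi_{N_{k-1}M}$ lets the induction continue.

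The only subtlety is the initial case where $\pi_1(Y)$ may be non-abelian, which I would handle directly via the induced homomorphism $\pi_1(B) \to \pi_1(Y)$: the group $\pi_1(B)$ is free abelian on the classes of the $S^1$-factors of $B$, the map $(\varphi_N)_\ast$ multiplies each such generator by $N$, and the images of these generators in $\pi_1(Y)$ have finite order by hypothesis, so taking $N$ to be a common multiple of their orders trivialises the homomorphism and renders the composite nullhomotopic in $Y_1 = K(\pi_1(Y), 1)$. The higher stages then proceed uniformly. I expect no real difficulty beyond bookkeeping: once the cohomological action of $\varphi_N^\ast$ is identified and the right family of self-maps of $B$ is in play, the rest is standard obstruction theory along the Postnikov tower.
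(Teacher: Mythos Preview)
Your argument is correct and follows the same overall strategy as the paper's: reduce, via a tower decomposition of $Y$, to killing one degree at a time a class in $H^k(B;\pi_k(Y))$ by precomposing with a non-zero-degree self-map of $B$, with the $\pi_1$ case handled separately. The only real difference is that the paper climbs the \emph{Whitehead} tower of $Y$ (successively lifting $f$ through the connective covers $Y\langle j+1\rangle \to Y\langle j\rangle$, the obstruction being the composite $B \to Y\langle j\rangle \to K(\pi_j(Y),j)$), whereas you descend the \emph{Postnikov} tower (nullhomotoping the projection $B \to Y_k$ once $B \to Y_{k-1}$ is already null). These are dual viewpoints leading to the same computation. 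Your explicit K\"unneth description of $\varphi_N^\ast$ is a bit more concrete than the paper's, and has the minor advantage that it only uses that each individual obstruction class is torsion, so it goes through verbatim when $\pi_k(Y)$ is an infinite torsion group; the paper's phrasing in terms of ``multiplication by a multiple of the order of $\pi$'' tacitly assumes finiteness, though it is easily repaired in the same way.
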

\begin{proof}
We first prove the claim for the case that $Y = K(\pi,j)$ is an Eilenberg--MacLane space, with $\pi$ a finite group (abelian if $j \geq 2$) and $1 \leq j \leq b$. If $j = 1$, homotopy classes of based maps $f\colon B \to Y$ are in bijection with group homomorphisms $ \bbZ^{\ell} \to \pi$, where $\ell$ is the number of indices $i_k$ that are $1$. Since every element in $\pi$ has finite order, the image of this group homomorphism is finite and precomposing with a suitable map $\varphi \colon B \to B$ of non-zero degree induces the constant homomorphism $\bbZ^{\ell} \to \pi$, which implies that $f\circ \varphi$ is null. If $2 \leq j \leq \dim(B)$, homotopy classes of maps $B \to Y$ are in bijection with $H^j(B;\pi)$, and we can find a suitable map $\varphi \colon B \to B$ of non-zero degree that induces multiplication by a multiple of the order of $\pi$ on $j$th cohomology, which implies that $f\circ \varphi$ is null.

Employing an inductive argument over the Whitehead tower of $Y$ that decomposes into fiber sequences $Y\langle j +1\rangle \to Y\langle j \rangle \to K(j,\pi_j(Y))$, we can precompose with suitable maps $\varphi_j \colon B \to B$ of non-zero degree such that $f \circ \varphi_1 \circ \dots \circ \varphi_j \colon B \to Y$ can be lifted along $Y\langle j+1\rangle \to Y\langle j \rangle$. At the end, we have produced a map $\varphi = \varphi_1 \circ \dots \circ \varphi_{\dim(B)} \colon B \to B$ of non-zero degree such that $f \circ \varphi$ can be lifted along $Y\langle \dim(B) +1\rangle \to Y$. But any map $B \to Y\langle \dim(B)+1 \rangle$ is null, which finishes the proof.
\end{proof}

\begin{proof}[Proof of \pref{Lemma}{lem:closing}] The given bundle $\pi$ is classified by a map $f\colon B \to B\!\Diffuo(S^n,D_-^n) = B\!\Diffuo_{\partial}(D^{n}_+)$, well-defined up to homotopy. If $n$ is even, then it follows from \cite[Theorem 4.1]{RW_UpperRange} that the assumption on $Y$ in \pref{Lemma}{lem:HomThTorus} above is satisfied for $Y = \BDiffuo_{\partial}(D^{n})$, so (i) follows from this lemma.

For (ii), we first note that the fiber sequence 
\begin{equation*}
B\!\Diffuo_{\partial}(D^{n+1}) \to B\!\Diffuo(D^{n+1},D^n_-) \to B\!\Diffuo_{\partial}(D^{n}_+),
\end{equation*}
where $D^n_+$ denotes the upper hemisphere of $\partial D^{n+1} = S^n$, can be delooped with respect to the canonical $E_d$ structure \cite{BDiff-delooping}, which gives rise to an exact sequence 
\begin{equation*} \label{SES_Mapping_Spaces}
[B,B\!\Diffuo(D^{n+1},D^n_-)] \to [B,B\!\Diffuo_{\partial}(D^{n})] \xrightarrow{\delta} [B,B^2\Diffuo_{\partial}(D^{n+1})] 
\end{equation*}
of pointed sets of homotopy classes.

Since $n$ is odd and hence $n+1 \geq 6$ is even, it follows from \cite[Theorem 4.1]{RW_UpperRange} as above that the homotopy groups of the space $B^2\!\Diffuo_{\partial}(D^{n+1})$ only contain elements of finite order in degrees at most $n-3$. Thus, \pref{Lemma}{lem:HomThTorus} above implies that there exists a map $\varphi\colon B \to B$ of non-zero degree such that $\delta(f\circ \varphi) = \delta (f) \circ \varphi= 0$, hence $\varphi^{\ast} \pi$ is the bundle of boundaries of a smooth $D^{n+1}$-bundle, as claimed.
\end{proof}

\subsection{Characteristic Classes}
In this section, we prove that the total space of the fiber bundle $S^p \times S^q \to E \to S^{4i-p} \times S^{4j-q}$ constructed above has non-vanishing $\hat{A}$-genus.

\begin{lem} \label{lem:ConstructionBundle1} 
	The only two potentially non-vanishing Pontryagin-numbers of $E$ are $p_{i+j}$ and $p_ip_j$, and the latter is actually non-zero. Furthermore, $\hat{A}(E) \neq 0$.\end{lem}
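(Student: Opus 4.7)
The plan is to work with the vertical tangent bundle $T^v E$ of $\pi\colon E\to B$; since $TB$ is stably trivial ($B$ is a product of spheres), the Pontryagin classes of $TE$ agree with those of $T^v E$. Using the plumbing decomposition $E = E_\alpha \cup E_\beta \cup D_\infty$ corresponding to the fibre-wise splitting $S^p\times S^q = (S^p\times D^q_-)\cup(D^p_-\times S^q)\cup(D^p_+\times D^q_+)$, I would identify $T^v E|_{E_\alpha}$ stably with $TS^p\oplus V_\alpha$, where $V_\alpha$ is a rank-$q$ bundle over $S^p\times S^{4i-p}$ whose classifying map is trivial on the sub-complex $(D^p_-\times S^{4i-p})\cup(S^p\times D^{4i-p}_-)$ and therefore factors through the quotient $S^p\wedge S^{4i-p} = S^{4i}$, the induced map being $\alpha$ itself. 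Consequently $p_k(V_\alpha)$ vanishes for $k\neq i$ and $p_i(V_\alpha) = p_i(\alpha)\cdot u_p u_{4i-p}$; the analogous statement holds for $V_\beta$ on $E_\beta$, while $T^v E|_{D_\infty}$ is trivial.

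Since the trivialised disk sub-bundle gives a section of $\pi$, the Serre spectral sequence collapses and $H^*(E)$ becomes a free $H^*(B)$-module on lifts $a\in H^p(E)$, $b\in H^q(E)$ of the fibre generators $u_p, u_q$. Writing any class $\xi\in H^*(E)$ as $\xi_{00} + \xi_{01}a + \xi_{10}b + \xi_{11}ab$ with coefficients in $H^*(B)$, its restrictions to $E_\alpha$, $E_\beta$, $D_\infty$ read $\xi_{00} + \xi_{01}u_p$, $\xi_{00} + \xi_{10}u_q$, and $\xi_{00}$ respectively. Combined with the restrictions of Pontryagin classes described above, this forces $p_k(T^v E)$ to be of the form $\xi_{11}\,ab$ for $k\notin\{i, j\}$, whereas $p_i(T^v E) = p_i(\alpha)\,u_{4i-p}\cdot a + c_i\,ab$ and analogously for $p_j(T^v E)$. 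Since $a^2 = b^2 = 0$, any Pontryagin monomial involving a $p_k$ with $k\notin\{i,j,i+j\}$ vanishes when multiplied by any other non-constant Pontryagin class, leaving only $p_{i+j}$ and $p_ip_j$ as potentially non-vanishing Pontryagin numbers. Multiplying $p_i$ and $p_j$ while using $a^2 = b^2 = 0$ to kill the cross-terms yields $p_ip_j[E] = p_i(\alpha)p_j(\beta) \neq 0$.

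For $\hat A(E)\neq 0$, the key is that $T^v E$ stably splits as $\tilde V_\alpha \oplus \tilde V_\beta$: the clutching differentials $d\alpha_x$ and $d\beta_y$ take values in the triangular subgroups of $GL(TS^p\oplus TS^q)$, and triangular matrices are homotopic to block-diagonal ones in $GL$, so the structure group of $T^v E$ stably reduces to $SO(p)\times SO(q)$. Since $p_i^2 = p_j^2 = 0$ in $H^*(E;\bbQ)$, the multiplicativity of $\hat A$ then truncates to
\begin{equation*}
\hat A(T^v E) = \hat A(\tilde V_\alpha)\,\hat A(\tilde V_\beta) = (1 + d_i p_i)(1 + d_j p_j),
\end{equation*}
where $d_i$ and $d_j$ are the non-zero (Bernoulli-type) leading coefficients of the Hirzebruch polynomials $\hat A_i, \hat A_j$. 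Integrating over $E$ yields $\hat A(E) = d_i d_j\cdot p_i(\alpha)p_j(\beta) \neq 0$. The main technical obstacle is the globalisation of this stable splitting, namely arranging that the triangular reductions from $\alpha$ on $E_\alpha$ and from $\beta$ on $E_\beta$ deform to compatible block-diagonal reductions on their overlaps, which should however be straightforward because both reductions become trivial on the intersection $E_\alpha\cap E_\beta = D^p_-\times D^q_-\times B$.
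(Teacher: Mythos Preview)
Your argument for the first assertion (only $p_{i+j}$ and $p_ip_j$ can be nonzero, and $p_ip_j[E]\neq 0$) is essentially correct and close in spirit to the paper's proof, which also restricts the tangent bundle to the two handle pieces. One caveat: the piece you call $D_\infty$ is \emph{not} the trivialised disk sub-bundle sitting in $D^p_-\times D^q_-$; it is the disk bundle glued in by the closing-off procedure of \pref{Lemma}{lem:closing}. In case~(ii) of that lemma this is a possibly non-linear, non-trivial $D^{p+q}$-bundle, so your claim ``$T^vE|_{D_\infty}$ is trivial'' is unjustified. Fortunately this does not matter for the Pontryagin-number statement: the restrictions to $E_\alpha$ and $E_\beta$ already force $(p_k)_{00}=(p_k)_{01}=(p_k)_{10}=0$ for $k\notin\{i,j\}$, so the $D_\infty$-restriction is redundant there.

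For $\hat A(E)\neq 0$, however, the same issue is fatal to your approach. Your computation $\hat A(E)=d_id_j\,p_i(\alpha)p_j(\beta)$ hinges on the global stable splitting $T^vE\cong \tilde V_\alpha\oplus\tilde V_\beta$, and you only address the compatibility on $E_\alpha\cap E_\beta$. The genuine obstacle is extending the block-diagonal reduction over $D_\infty$: since $D_\infty$ may be a non-trivial smooth disk bundle, its vertical tangent bundle need not be trivial and there is no evident reason it splits compatibly with the reduction on $\partial D_\infty$. Without the splitting you cannot separate the $p_{i+j}$-contribution from the $p_ip_j$-contribution in $\hat A_{i+j}$, and $p_{i+j}[E]$ is a priori unknown.

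The paper sidesteps this entirely. It observes that $E$ fibers over a sphere, so $\sigma(E)=0$; this gives the linear relation $s_{i+j}\,p_{i+j}[E]+s_{i,j}\,p_ip_j[E]=0$. A short computation with the multiplicative sequences (\pref{Lemma}{lem:Ahat_nonzero}) then shows that the vectors $(s_{i+j},s_{i,j})$ and $(a_{i+j},a_{i,j})$ are linearly independent, so $\hat A(E)=a_{i+j}\,p_{i+j}[E]+a_{i,j}\,p_ip_j[E]$ cannot vanish. This argument needs no control over $D_\infty$ and no global splitting; it trades your bundle-geometric step for an algebraic identity between $\cL$- and $\hat\cA$-coefficients.
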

\begin{proof} 
To verify the first claim, it suffices to consider the $S^{p} \times S^{q} \backslash\mathring{D}^{p+q}$-bundle $E_0 \to S^{4i-p} \times S^{4j-q} \backslash\mathring{D}^{4(i+j)-(p+q)} =: B  \simeq S^{4i-p} \vee S^{4j-q}$ obtained from $E \to S^{4i-p} \times S^{4j-q}$ previously constructed by cutting out a disk in the base, and show that the only non-zero Pontryagin classes of $E_0$ are $p_i$ and $p_j$, as all non-zero Pontryagin classes of $E$ besides $p_{i+j}$ remain non-zero under the map $H^{\ast}(E,\mathbb Q) \to H^{\ast}(E_0,\mathbb Q)$. For this note that the core $S^p \vee S^q \subset S^{p} \times S^{q} \backslash\mathring{D}^{p+q}$ is a deformation retract and is fixed under the handle twisting in \pref{Construction}{constr:bundle}, hence the bundle $E_0 \to S^{4i-p} \vee S^{4j-q}$ is trivial as a fibration. 
 
The submanifolds $(\star \vee S^{q}) \times (S^{4i-p} \vee \star)$ and $(S^{p}\vee \star) \times (\star\vee S^{4j-q})$ have a trivial normal bundle in $E$, hence the stable tangent bundle of $E$ is trivial when pulled back along the inclusion of these submanifolds. Moreover, the compositions
\[(S^{p} \vee \star) \times (S^{4i-p} \vee \star)\ra S^{4i}\xra{\alpha} BSO \quad \text{and} \quad (\star \vee S^{q}) \times (\star \vee S^{4j-q})\ra S^{4j}\xra{\beta} BSO\] classify the stabilized normal bundles of these spheres, where the first maps are given by collapsing the $(4i-1)$- resp. $(4j-1)$-skeleton. This implies that the only non-zero Pontryagin classes of $E_0$ are $p_i$ and $p_j$, and that these classes evaluate nontrivially when paired with the fundamental classes of the submanifolds $(\star \vee S^{q}) \times (S^{4i-p} \vee \star)$ and $(S^{p}\vee \star) \times (\star\vee S^{4j-q})$. As these submanifolds intersect transversally in one point in $E$, we deduce that $p_ip_j(E)$ is non-zero.

The signature of any manifold that fibers over a sphere vanishes by \cite{ChernHirzebruchSerre} for $d\ge2$ and by \cite[Theorem I.2.2]{meyer_faserbuendel} for $d=1$. The claim $\hat{A}(E) \neq 0$ then follows from the proceeding \pref{Lemma}{lem:Ahat_nonzero}.
\end{proof}

\begin{lem}\label{lem:Ahat_nonzero}
 Let $M$ be a closed oriented manifold of dimension $4(i+j)$ such that all characteristic numbers of $M$ except $p_{i+j}$ and $p_ip_j$ vanish, and $p_ip_j(M) \neq 0$. Then if the signature $\sigma(M)$ vanishes, $\hat{A}(M) \neq 0$.
\end{lem}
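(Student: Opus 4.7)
The plan is to reduce the lemma to the non-singularity of a universal $2\times 2$ matrix of coefficients in the Hirzebruch polynomials $L_n$ and $\hat{A}_n$ (with $n := i+j$) and then settle it by a clean sign argument. I would first expand both polynomials in the basis of Pontryagin monomials $p_\lambda$ indexed by partitions $\lambda$ of $n$:
\begin{align*}
L_n &= A\cdot p_n + B\cdot p_i p_j + (\text{other monomials}),\\
\hat{A}_n &= C\cdot p_n + D\cdot p_i p_j + (\text{other monomials}),
\end{align*}
for universal rationals $A,B,C,D$ depending only on $i,j$. Pairing with $[M]$ and using that every characteristic number of $M$ outside $\{p_n[M], p_ip_j[M]\}$ vanishes yields $\sigma(M) = A\,p_n[M] + B\,p_ip_j[M]$ and $\hat{A}(M) = C\,p_n[M] + D\,p_ip_j[M]$. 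Combined with $\sigma(M)=0$ and $p_ip_j[M]\neq 0$, the desired conclusion $\hat{A}(M)\neq 0$ reduces to the non-vanishing of $AD - BC$.

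Next I would compute $A,B,C,D$ by exploiting Whitney-sum multiplicativity: for any multiplicative genus $\varphi$, $\varphi_n(E_1 \oplus E_2) = \sum_{a+b=n}\varphi_a(E_1)\varphi_b(E_2)$. Evaluate on the external sum of vector bundles $E_1\to S^{4i}$ and $E_2\to S^{4j}$ chosen so that each has exactly one non-trivial Pontryagin class (of degree $i$ and $j$, respectively). The relation $x^2 = y^2 = 0$ in $H^\ast(S^{4i}\times S^{4j};\bbQ)$ kills every Pontryagin monomial of degree $n$ except those corresponding to the partitions $(n)$ and $(i,j)$, leading to the universal identities $A + B = a_i a_j$ and $C + D = \hat{a}_i\hat{a}_j$, where $a_k$ and $\hat{a}_k$ denote the coefficients of $p_k$ in $L_k$ and $\hat{A}_k$, respectively. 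An analogous computation over $S^{4n}$ (with a bundle whose only non-vanishing Pontryagin class is $p_n$) isolates $A = a_n$ and $C = \hat{a}_n$, and therefore
\[AD - BC = a_n\hat{a}_i\hat{a}_j - a_i a_j\hat{a}_n = \hat{a}_n\hat{a}_i\hat{a}_j\cdot(r_n - r_i r_j),\]
where $r_k := a_k/\hat{a}_k$.

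Finally, I would compute $r_k$ in closed form. Via Newton's identity, the coefficient of $p_k$ in any multiplicative genus $\varphi$ equals $(-1)^{k-1} k$ times the coefficient of $x^{2k}$ in $\log Q_\varphi(x)$. Specializing to $Q_L(x) = x/\tanh x$ and $Q_{\hat{A}}(x) = (x/2)/\sinh(x/2)$, a short series manipulation starting from $\tfrac{d}{dx}\log(x\coth x) = 1/x - 2/\sinh(2x)$ yields
\[a_k = \frac{2^{2k+1}(2^{2k-1}-1)\,B_{2k}}{2k\cdot(2k)!}, \qquad \hat{a}_k = -\frac{B_{2k}}{2k\cdot(2k)!},\]
so $r_k = -2^{2k+1}(2^{2k-1}-1) < 0$ for every $k\ge 1$. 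Consequently $r_n < 0$ while $r_i r_j > 0$ (as the product of two negative numbers), giving $r_n\neq r_i r_j$ and therefore $AD - BC \neq 0$. The main obstacle is the closed-form calculation of $a_k$ and $\hat{a}_k$; once these are in hand, the strict sign $r_k < 0$ makes the final step immediate and bypasses the more delicate question of whether $k\mapsto r_k$ is multiplicative.
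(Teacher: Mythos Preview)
Your approach is essentially the same as the paper's: reduce to the non-singularity of the $2\times 2$ matrix of coefficients, use multiplicativity to replace the mixed coefficients $B,D$ by the products $a_ia_j,\hat a_i\hat a_j$, and then invoke the explicit closed forms for the top coefficients. Your final sign argument ($r_k<0$ for all $k$, hence $r_n<0<r_ir_j$) is a slightly slicker way of seeing that the determinant is non-zero than the paper's direct evaluation $-\tfrac14(2^{2i}-1)(2^{2j}-1)$, but it is the same computation in disguise.

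Two small slips to clean up, neither of which affects the conclusion. First, the identity you extract from $S^{4i}\times S^{4j}$ is $A+\lambda B=a_ia_j$ with $\lambda=2$ when $i=j$ (since then $p_i(E)=x+y$ and $p_i^2(E)=2xy$) and $\lambda=1$ otherwise; the paper makes this case distinction explicit, and it only changes $AD-BC$ by a factor of $\tfrac12$. Second, the displayed expressions you give for $a_k$ and $\hat a_k$ are actually the coefficients of $x^{2k}$ in $\log Q$, not the coefficients of $p_k$ in $L_k$ and $\hat A_k$: you have omitted the Newton factor $(-1)^{k-1}k$ that you yourself announced one line earlier (check $k=2$: the coefficient of $p_2$ in $\hat A_2$ is $-\tfrac{1}{1440}$, not $\tfrac{1}{2880}$). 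Since this factor is the same for both genera it cancels in $r_k$, so your sign conclusion survives, but the intermediate formulas should be corrected.
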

\begin{proof}
We will show that the corresponding polynomials $\cL_{i+j}$ and $\hat{\cA}_{i+j}$ are linearly independent in $\mathbb Q[p_i,p_j,p_{i+j}]^{4(i+j)}$, which implies the assertion. Let us denote the coefficients of the $\cL$- and $\hat{\cA}$-polynomials as given in the following expressions.
\begin{align*}
\cL &=1 + \frac{1}{3}p_1 + \dots + s_ip_i + \dots + s_jp_j + \dots + s_{i+j}p_{i+j} + \dots + s_{i,j}p_ip_j + \dots
\end{align*}
\begin{equation*}
\hat{\cA} = 1 - \frac{1}{24}p_1 + \dots + a_ip_i + \dots + a_jp_j + \dots + a_{i+j}p_{i+j} + \dots + a_{i,j}p_ip_j + \dots
\end{equation*}
We claim that\footnote{For $\cL$, this identity is mentioned in the appendix of a preliminary version of \cite{Dalian}.}  $s_is_j = s_{i+j} + \lambda s_{i,j}$ and $a_ia_j = a_{i+j} + \lambda a_{i,j}$ where $\lambda =2$ if $i=j$ and $\lambda =1$ if $i\neq j$. This can be derived from the the multiplicativity of the corresponding sequences \cite{MilnorStasheff}, as follows. We consider the multiplicative sequence arising from $\cL$, abusively denoted by the same variable, where we abbreviate the sequence $(0, \dots, 0,1,0,\dots)$ with the $1$ at the $\ell$th position by $e_{\ell}$.

Suppose first that $i = j$. Then
\begin{align*}
	s_i^2 + 2s_{i,i} &= \left((1+ s_ie_i + s_{i,i}e_{2i} + \dots)^2\right)_{2i} \\
	&= \left( \cL(1+ e_i)\cL(1+ e_i)\right)_{2i} \\
		&=\cL((1+ e_i)(1+ e_i))_{2i} \\
		&= \cL(1+ 2\cdot e_i + e_{2i})_{2i} \\
		&= s_{2i} + 4s_{i,i} ,
\end{align*}
and subtracting $2s_{i,i}$ gives the desired equation.

If $i \neq j$, the calculation is slightly different. Without loss of generality, we may assume $i < j$. Furthermore, we will first assume that $i$ does not divide $j$, in which case we have
\begin{align*}
	s_is_j &= \left((1+ s_ie_i + s_{i,i}e_{2i} + \dots)(1+ s_je_j + s_{j,j}e_{2j} + \dots)\right)_{i+j} \\
	&= \left( \cL(1+ e_i)\cL(1+ e_j)\right)_{i+j} \\
		&=\cL((1+ e_i)(1+ e_j))_{i+j} \\
		&= \cL(1+ e_i + e_j + e_{i+j})_{i+j} \\
		&= s_{i+j} + s_{i,j},
\end{align*}
as claimed. If $i$ does divide $j$, however, and $r \coloneqq j/i$ is integral, this computation has to be changed in such a way that $s_{i,i, \dots, i}$ (with $i$ appearing $r$ times) is added to both terms at the very beginning and end of the equation, which does not affect the truth of the identity. The proof of the identity for the coefficients of $\hat{\cA}$ is completely analogous. 

It remains to show that the matrix \begin{equation*}
\begin{pmatrix}
s_{i,j} & s_{i+j} \\
a_{i,j} & a_{i+j} \\
\end{pmatrix}
\end{equation*}
or equivalently the matrix
\begin{equation*}
\begin{pmatrix}
s_is_j & s_{i+j} \\
a_ia_j & a_{i+j} \\
\end{pmatrix}
\end{equation*} 
is non-singular. Since for all $k \geq 0$,
\[s_k = \frac{2^{2k}(2^{2k-1}-1)B_{2k}}{(2k)!} \quad \text{and} \quad a_k = - \frac{B_{2k}}{2 \cdot (2k)!}\] 
(see \cite[Ch.\,1,3]{Hirzebruch}), it thus suffices to check that the matrix 
\begin{equation*}
\begin{pmatrix}
(2^{2i-1}-1)(2^{2j-1}-1) & 2^{2(i+j)-1}-1 \\
\frac{1}{4} & -\frac{1}{2} \\
\end{pmatrix}
\end{equation*} 
is non-singular, which is clear since its determinant $-\frac{1}{4}(2^{2i}-1)(2^{2j}-1)$ is always negative. 
\end{proof}

\begin{rem}
For a far more general formula of the coefficients of $\cL$- and $\hat{\cA}$-polynomials, see \cite{Fowler--Su, BergBerg}.
\end{rem}
\noindent 
In order to make use of the bundle $E \to S^{4i-p} \times S^{4j-q}$ from above with applications to positive curvature in mind, we need the existence of a $\Spin$-Structure on its vertical tangent bundle, or equivalently the total space of $E$. This follows from the next lemma since the base space $S^{4i-p} \times S^{4j-q}$ is $\Spin$ and the fiber $S^p \times S^q$ is $2$-connected as $p, q > 2$ by assumption.
\begin{lem}\label{lem:exspinstr}
For a $2$-connected smooth $d$-manifold $F$ and a $\Spin$-manifold $B$, the total space of any oriented smooth bundle $E \to B$ with fiber $F$ that contains a trivialized disk bundle admits a $\Spin$-structure.
\end{lem}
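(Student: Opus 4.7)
My plan is to reduce the existence of a $\Spin$-structure on $E$ to the vanishing of a single Stiefel--Whitney class, and then to kill that class by combining a Serre spectral sequence computation with the section provided by the trivialised disk subbundle.

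First, I would split the tangent bundle of the total space as $TE \cong T_v E \oplus \pi^* TB$, where $T_v E$ denotes the vertical tangent bundle of $\pi$. Since $B$ is $\Spin$, the classes $w_1(\pi^*TB)$ and $w_2(\pi^*TB)$ vanish, and since the bundle $E \to B$ is oriented as a smooth bundle, $w_1(T_v E) = 0$. Combined with $w_2(TE) = w_2(T_v E) + w_1(T_v E) \cdot w_1(\pi^* TB) + w_2(\pi^* TB)$, this reduces the problem to showing $w_2(T_v E) = 0 \in H^2(E; \bbZ/2)$.

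The key intermediate step is to prove that $\pi^*\colon H^2(B;\bbZ/2) \to H^2(E;\bbZ/2)$ is an isomorphism. This I would extract from the Serre spectral sequence of $F \to E \xrightarrow{\pi} B$ with $\bbZ/2$-coefficients: the $2$-connectedness of $F$ gives $H^1(F;\bbZ/2) = H^2(F;\bbZ/2) = 0$ (and it also kills any potential twist from $\pi_1(B)$ on these groups), so $E_2^{p,q}$ vanishes for $p+q=2$ and $p<2$. A direct bidegree check rules out incoming and outgoing differentials at $E_r^{2,0}$, so $E_\infty^{2,0} = E_2^{2,0} = H^2(B;\bbZ/2)$, and the claimed isomorphism follows.

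Finally, the trivialised $D^d$-subbundle $B \times D^d \hookrightarrow E$ yields a smooth section $s\colon B \to E$, for instance by sending $b \mapsto (b,0)$. Since $\pi \circ s = \id_B$, the map $s^*$ is a two-sided inverse of $\pi^*$ on $H^2(-;\bbZ/2)$; in particular, $s^*$ is injective on this group. On the other hand, along the trivialised disk subbundle the vertical tangent bundle $T_v E$ is canonically identified with the pullback of $TD^d$, so its restriction to $s(B)$ is trivial. Hence $s^* w_2(T_v E) = 0$, which combined with the injectivity of $s^*$ forces $w_2(T_v E) = 0$, concluding the proof.

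I do not anticipate a serious obstacle: the argument is essentially formal once the geometric input (the section coming from the trivialisation) is combined with the algebraic tool (the Serre spectral sequence applied in the stable range guaranteed by $2$-connectedness). The only point that deserves care is checking that the identification of $T_v E$ with the trivial bundle along the disk subbundle is compatible with the chosen trivialisation, which is immediate from the definition of a trivialised subbundle.
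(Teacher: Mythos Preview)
Your proof is correct and follows essentially the same approach as the paper's: both arguments use the section coming from the trivialised disk subbundle, the isomorphism $H^2(B;\bbZ/2)\cong H^2(E;\bbZ/2)$ afforded by the $2$-connectedness of the fibre, and the observation that the relevant $w_2$ pulls back to zero along this section. The only cosmetic differences are that the paper works directly with $w_2(TE)$ and the codimension-$0$ inclusion $\iota\colon B\times D^d\hookrightarrow E$ (noting that $\pi$ is $3$-connected) rather than splitting off $T_vE$ and invoking the Serre spectral sequence.
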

\begin{proof}
Let $\iota\colon B\times D^d\embeds E$ be the bundle embedding of trivial disk bundle. Then $B=B\times\{0\}\to B\times D^d \to E$ is a section of $E$. The map $E\to B$ is $3$-connected and hence $\iota$ induces an isomorphism $H^2(B\times D^d)\to H^2(E)$. Therefore we have
\[\iota^*w_2(E) = w_2(\iota^*E) = w_2(T(B\times D^d)) = w_2(B) = 0\]
and injectivity of $\iota$ implies that $w_2(E)$ vanishes.
\end{proof}

\begin{rem}
	In \pref{Lemma}{lem:exspinstr}, the assumption that $E$ contains a trivial disk bundle is essential: Let $\pi\colon V\to S^2$ be a vector bundle of rank $n\ge3$ with nontrivial second Stiefel--Whitney class and let $S^{n-1}\to E\overset{\pi}\too S^2$ be the embedded sphere bundle. The stabilized tangent bundle of $E$ is given by 
	\[TE\oplus\underline\bbR^2\cong \underbrace{(\pi^*TS^2\oplus\underline\bbR)}_{\cong\underline\bbR^3}\oplus \underbrace{(T_\pi E\oplus\underline\bbR)}_{\pi^*V}\]
	and hence $w_2(E) \not=0$ and $E$ does not admit a $\Spin$-structure.
\end{rem}

\noindent
The following proposition combines what we proved so far and is the essential result needed for the geometric applications in \pref{Section}{sec:applications}.
\begin{prop}\label{prop:spin-structures}
	The total space $E$ from \pref{Lemma}{lem:ConstructionBundle1} admits no metric of positive scalar curvature.
\end{prop}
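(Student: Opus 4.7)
The plan is to apply the classical Lichnerowicz obstruction directly. Recall that for a closed $\Spin$-manifold $N$ of dimension divisible by four, the existence of a metric of positive scalar curvature forces $\hat{A}(N) = 0$, since the Dirac operator on such a manifold is invertible (its square equals $\nabla^{\ast}\nabla + \tfrac{1}{4}\scal$, which is strictly positive), so its index, which equals $\hat{A}(N)$, must vanish.

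First I would check that $E$ satisfies the hypotheses of the Lichnerowicz theorem. Its dimension is $(p+q) + (4i-p) + (4j-q) = 4(i+j)$, so in particular it is divisible by four and the $\hat{A}$-genus of $E$ is a well-defined integer. The base $S^{4i-p} \times S^{4j-q}$ is a product of spheres and thus $\Spin$, and by the assumptions $p, q > 2$ (which follow from $p > 2j \geq 2$ and $q > 2i \geq 2$) the fiber $S^p \times S^q$ is $2$-connected. \pref{Construction}{constr:bundle} produces $E$ as a bundle containing a trivialised $D^{p+q}$-subbundle, so \pref{Lemma}{lem:exspinstr} applies and supplies a $\Spin$-structure on $E$.

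Next, \pref{Lemma}{lem:ConstructionBundle1} gives $\hat{A}(E) \neq 0$. If $E$ admitted a metric of positive scalar curvature, the Lichnerowicz vanishing theorem would force $\hat{A}(E) = 0$, contradicting \pref{Lemma}{lem:ConstructionBundle1}. Hence $E$ does not admit a metric of positive scalar curvature.

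There is no serious obstacle here: the work has already been done in the two preceding lemmas, and the proposition is just the combination of those inputs with a one-line invocation of Lichnerowicz. The only things worth being careful about are that the dimension is a multiple of $4$ (so that $\hat{A}(E)$ is a number rather than a KO-class, although the KO-valued refinement would of course also suffice) and that all the numerical hypotheses on $p,q,i,j$ used to apply \pref{Lemma}{lem:exspinstr} are indeed in force.
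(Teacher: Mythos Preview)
Your argument is correct and matches the paper's proof essentially line for line: both verify that \pref{Lemma}{lem:exspinstr} applies (base $\Spin$, fiber $2$-connected since $p,q>2$, trivialised disk subbundle present) to get a $\Spin$-structure on $E$, then invoke $\hat{A}(E)\neq 0$ from \pref{Lemma}{lem:ConstructionBundle1} together with the Lichnerowicz formula and the Atiyah--Singer index theorem. Your additional remark that $\dim E = 4(i+j)$ is a helpful sanity check but not strictly needed, as the paper is content with the standard Lichnerowicz argument.
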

\begin{proof}
By the above discussion (\pref{Lemma}{lem:exspinstr}), there is a $\Spin$-structure on $E$. Using that we know $\hat{A}(E)\neq 0$ by \pref{Lemma}{lem:ConstructionBundle1}, the statement about metrics of positive scalar curvature is now a well-known consequence of the Lichnerowicz formula \cite{lichnerowicz} and the Atiyah--Singer index theorem \cite{atiyahsinger_index}.
\end{proof}

\noindent 
Recall the definition of the generalized Miller-Morita-Mumford-classes $\kappa_c \in H^{\ast}(B\Diff(M))$, also known as tautological or simply $\kappa$-classes: for a smooth $M$-bundle $\pi\colon E \to B$, let $T_{\pi}E$ denote the vertical tangent bundle. Then for any class $c \in H^{\ast}(BSO(\dim(M)))$, $\kappa_{c} := \pi_! c(T_{\pi}E) \in H^{\ast-\dim(M)}(B)$, where $\pi_!$ stands for the Gysin map\footnote{This is often called fiber integration.}. Note that for the bundle $\pi\colon E\to S^{4i-p}\times S^{4j-q}$, we have
\begin{align*}
	\scpr{\kappa_{\hat\calA_k}(\pi),[S^{4i-p}\times S^{4j-q}]} &= \scpr{\pi_! \hat\calA_k(T_{\pi}E),[S^{4i-p}\times S^{4j-q}]}\\
		&=\scpr{\hat\calA(T(S^{4i-p}\times S^{4j-q}))\cup\pi_! \hat\calA_k(T_{\pi}E),[S^{4i-p}\times S^{4j-q}]}\\
		&=\scpr{\hat\calA(\pi^*T(S^{4i-p}\times S^{4j-q}))\cup \hat\calA_k(T_\pi E), [E]}\\	
		&=\scpr{\hat\calA(TE),[E]} = \hat A(E)\not=0.
\end{align*}
where $\hat\calA_k$ is the degree $k$ homogeneous part of the $\hat\calA$-class. We conclude that $0 \neq \kappa_{\hat\calA_k}(E) \in H^{4k-d}(S^{4i-p}\times S^{4j-q};\bbQ)$. Since $E$ contains a trivial $D^d$-subbundle we deduce the following by glueing  trivial bundles onto $\pi$ 

\begin{cor}\label{cor:bundle-exists}
For any $\Spin$-manifold $M$ of dimension $d = p+q$, there exists a smooth bundle $\pi_M \colon E_M \to S^{4i-p}\times S^{4j-q}$ with fiber $(S^p\times S^q)\#M$ whose vertical tangent bundle has $\Spin$-structure and which satisfies $\kappa_{\hat\calA_k}(\pi_M) \neq 0$.
\end{cor}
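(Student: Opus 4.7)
The plan is to obtain $\pi_M$ from the bundle $\pi\colon E\to B=S^{4i-p}\times S^{4j-q}$ of \pref{Proposition}{prop:bundleintro} by a fiberwise connected sum with $M$. Concretely, I would pick two disjoint embedded disks $D^d,D'^d\subset M$, remove the interior of the trivialized disk subbundle $B\times D^d\subset E$, and glue in $B\times(M\setminus\mathring D^d)$ along $B\times S^{d-1}$. The resulting smooth bundle $\pi_M\colon E_M\to B$ has fiber $(S^p\times S^q)\#M$ and retains a trivialized disk subbundle, namely $B\times D'^d$.

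To put a spin structure on the vertical tangent bundle $T_{\pi_M}E_M$, I would glue the spin structure on $T_\pi E$ provided by \pref{Lemma}{lem:exspinstr} with the pullback of the spin structure on $TM$. Their restrictions to the gluing region $B\times S^{d-1}$ are both isomorphic to the unique spin structure on $TS^{d-1}\oplus\underline{\bbR}$ that extends over a disk; such spin structures agree because $d\ge 6$ makes $S^{d-1}$ simply connected. With this in place, the integration-along-the-fiber computation preceding the corollary applies verbatim and yields
\[
\scpr{\kappa_{\hat\calA_{i+j}}(\pi_M),[B]}=\hat A(E_M),
\]
using once more that $B$ is stably parallelizable, so $\hat\calA(TE_M)=\hat\calA(T_{\pi_M}E_M)\cdot\pi_M^*\hat\calA(TB)=\hat\calA(T_{\pi_M}E_M)$.

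It remains to show $\hat A(E_M)\neq 0$. Oriented cobordism invariance of the $\hat A$-genus, combined with the standard cobordism realising the connected sum $E_M\sim E\sqcup(B\times M)$, gives $\hat A(E_M)=\hat A(E)+\hat A(B\times M)$. The second summand vanishes since $B$ is a product of positive-dimensional spheres and therefore stably parallelizable, forcing $\hat\calA(TB)=1$ and hence $\hat A(B\times M)=\hat A(B)\hat A(M)=0$. Together with $\hat A(E)\neq 0$ from \pref{Lemma}{lem:ConstructionBundle1}, this yields the desired non-vanishing. The one delicate point is the fiberwise spin gluing, since \pref{Lemma}{lem:exspinstr} does \emph{not} apply directly to $E_M$ whenever $M$ fails to be $2$-connected; circumventing this via the connected-sum spin argument above is the only non-formal step in the proof.
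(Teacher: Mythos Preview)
Your argument is correct and follows the same idea as the paper, which disposes of the corollary in a single sentence (``by glueing trivial bundles onto $\pi$''): both of you form $\pi_M$ as the fiber connected sum of $\pi$ with the trivial $M$-bundle along the given trivialized disk subbundle. Your detour through the cobordism $E_M\sim E\sqcup(B\times M)$ and the identity $\hat A(E_M)=\hat A(E)+\hat A(B)\hat A(M)=\hat A(E)$ is a perfectly good way to see that $\kappa_{\hat\calA_{i+j}}$ survives, though one can argue slightly more directly that $\hat\calA_{i+j}(T_{\pi_M}E_M)$ vanishes on the trivial piece $B\times(M\setminus\mathring D^d)$ for degree reasons (it is pulled back from $H^{4(i+j)}(M)=0$), so the fiber integral is literally unchanged; your spin-gluing discussion is exactly the detail the paper leaves implicit.
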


\section{Applications to spaces of metrics}\label{sec:applications}
\subsection{Metrics on $S^p\times S^q$}\label{sec:prc}
\noindent
For the implications on the space $\calF(M\#(S^p\times S^q))$, we use a slight upgrade of the detection principle from \cite{bew} where it is only stated for positive Ricci curvature. To make it easier to follow the line of thought, it seems most natural to include the proof from loc.~cit.~and adjust it slightly.

Let $M$ be a $d$-dimensional closed $\Spin$-manifold and let $\calR_\inv(M)$ denote the space of Riemannian metrics on $M$ such that the associated Dirac operator is invertible. Let us assume that $\calR_\inv(M)$ is nonempty and let $\pi\colon E\to B$ be an $M$-bundle over a connected space $B$ with a $\Spin$-structure on the vertical tangent bundle. We get an associated $\Diffuo(M)$-principal bundle $Q\to B$ such that $Q\times_{\Diffuo(M)}M\cong E$ and we define
\[\calR_{\inv}(\pi)\coloneqq Q\underset{\Diffuo(M)}{\times}\calR_\inv(M)\overset{\Pi}\too B\]
\begin{thm}\cite[Theorem 2.1]{bew}\label{thm:bew}
	In the situation described above, let us assume that
	\begin{enumerate}
		\item The action of $\pi_1(B)$ on $H^0(\calR_\inv(M))$ induced by the fiber transport factors through a finite group.
		\item For some $k>\frac d4$ the class $\kappa_{\hat\calA_k}(E)\in H^{4k-d}(B;\bbQ)$ is nontrivial.
	\end{enumerate}
	Then there exists an $r\in\{2,\dots,4k-d\}$ such that $H^{4k-d-r}(B;\bbQ)\not=0$ and $H^{r-1}(\calR_\inv(M);\bbQ)\not=0$. 
\end{thm}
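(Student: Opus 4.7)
The plan is to combine the family index theorem with the Leray--Serre spectral sequence of $\Pi$: the index theorem will supply an explicit nonzero element of $\ker\Pi^*$, which the spectral sequence then converts into the asserted non-triviality of lower-degree cohomology. Concretely, pulling back $\pi\colon E \to B$ along $\Pi$ yields a smooth $M$-bundle $\hat\pi\colon \hat E := \Pi^*E \to \calR_\inv(\pi)$, which carries a tautological family of vertical Riemannian metrics: the metric on the fiber of $\hat\pi$ over a point $(b,g) \in \calR_\inv(\pi)$ is $g$ itself. The associated family Dirac operator is then pointwise invertible, so its family index in $KO^{-d}(\calR_\inv(\pi))$ vanishes. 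By the Atiyah--Singer family index theorem and naturality of fiber integration, one deduces
\[
\Pi^*\kappa_{\hat\calA_k}(E) \;=\; \kappa_{\hat\calA_k}(\hat\pi) \;=\; 0 \quad \text{in } H^{4k-d}(\calR_\inv(\pi);\bbQ)
\]
for every $k \geq 0$; in particular, the nonzero class $\kappa_{\hat\calA_k}(E) \in H^{4k-d}(B;\bbQ)$ from hypothesis (ii) lies in $\ker\Pi^*$.

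Next, I use hypothesis (i) to pass to a controllable fiber. Since (i) implies that every $\pi_1(B)$-orbit $\calO \subset \pi_0(\calR_\inv(M))$ is finite, one can form the subbundle $Q \subset \calR_\inv(\pi)$ over $B$ with fiber $F_\calO := \bigsqcup_{X \in \calO} X$, and the vanishing above restricts to the pullback along $Q \hookrightarrow \calR_\inv(\pi) \to B$. The diagonal inclusion $\bbQ \hookrightarrow \calH^0(F_\calO) = \bbQ^\calO$, sending $1 \mapsto \sum_{X \in \calO} 1_X$, is split $\pi_1(B)$-equivariantly by averaging, so $H^*(B;\bbQ) \hookrightarrow H^*(B;\calH^0(F_\calO))$ is a split injection and $\kappa_{\hat\calA_k}(E)$ survives as a nonzero class in $E_2^{4k-d,0}$ of the Leray--Serre spectral sequence $E_2^{p,q} = H^p(B;\calH^q(F_\calO)) \Rightarrow H^{p+q}(Q;\bbQ)$ of $Q \to B$.

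Since the pullback of $\kappa_{\hat\calA_k}(E)$ to $Q$ vanishes, this class must die on $E_\infty$, so some differential $d_r\colon E_r^{4k-d-r,r-1} \to E_r^{4k-d,0}$ with $2 \leq r \leq 4k-d$ hits it. Hence $H^{4k-d-r}(B;\calH^{r-1}(F_\calO)) \neq 0$; a stalk inspection yields $H^{r-1}(X;\bbQ) \neq 0$ for some $X \in \calO$, and therefore $H^{r-1}(\calR_\inv(M);\bbQ) \neq 0$. Extracting the remaining assertion $H^{4k-d-r}(B;\bbQ) \neq 0$ from the twisted non-vanishing is the main obstacle: hypothesis (i) directly supplies a trivial summand of $\calH^0(F_\calO)$, but the analogous extraction of untwisted information from the higher local systems $\calH^{r-1}(F_\calO)$ is more delicate and requires the careful bookkeeping (via the orbit structure on $\calO$ and a Shapiro-type identification of the induced representation) carried out in \cite[Theorem 2.1]{bew}.
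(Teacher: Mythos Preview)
Your argument follows the paper's strategy: the family index theorem gives $\Pi^*\kappa_{\hat\calA_k}=0$, and the Serre spectral sequence of $\Pi$ then forces a nontrivial differential landing in $E_r^{4k-d,0}$ from some $E_r^{4k-d-r,r-1}$. The one genuine difference is your handling of hypothesis~(i). The paper passes to a finite cover $p\colon\tilde B\to B$ on which the $\pi_1$-action on $H^0(\calR_\inv(M))$ becomes trivial (using that $p^*$ is rationally injective to retain~(ii)), so that the bottom row $E_2^{*,0}$ has untwisted coefficients and the map $H^{4k-d}(\tilde B;\bbQ)\to E_2^{4k-d,0}$ is visibly injective. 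You instead stay over $B$, restrict the fiber to a single finite orbit $\mathcal{O}\subset\pi_0(\calR_\inv(M))$, and use the equivariant splitting of the diagonal $\bbQ\hookrightarrow\bbQ^{\mathcal{O}}$ to obtain the same injectivity directly. Both manoeuvres are valid and achieve the same goal.

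For the final step you flag as delicate---extracting the untwisted conclusion $H^{4k-d-r}(B;\bbQ)\neq0$ from the nonvanishing of a group with twisted coefficients $\calH^{r-1}$---the paper does not spell this out either: it writes the relevant $E_2$-term as $H^{4k-d-r}(B;H^{r-1}(\calR_\inv(M);\bbQ))$ with ostensibly constant coefficients and simply asserts both conclusions. So your proposal is not less complete than the paper's account; you are merely more explicit about where the citation of \cite{bew} is actually carrying weight.
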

\begin{proof}
	If the action $\pi_1(B)\actson H^0(\calR_\inv(M))$ factors through a finite group there is a finite cover $p\colon \tilde B\to B$ such that the action $\pi_1(\tilde B)\actson H^0(\calR_\inv(M))$ is trivial. The induced map $p^*\colon H^{k}(B;\bbQ)\to H^k(\tilde B;\bbQ)$ is injective and so the bundle $p^*E\to \tilde B$ satisfies assumption $(ii)$, too. Therefore we may assume that the action $\pi_1(B)\actson H^0(\calR_\inv(M))$ is trivial.
	
	Consider the following diagram of fibrations. 
	\begin{center}
	\begin{tikzpicture}
		\node (1) at (0,0) {$\calR_\inv(M)$};
		\node (2) at (4,0) {$\calR_\inv(\pi)$};
		\node (3) at (8,0) {$B$};
		
		\node (4) at (4,1.2) {$\Pi^*E$};
		\node (5) at (8,1.2) {$E$};
		\node (6) at (4,2.4) {$M$};
		\node (7) at (8,2.4) {$M$};
		
		\draw[->] (1) to (2);
		\draw[->] (2) to node[above]{$\Pi$} (3);
		\draw[->] (4) to (5);
		\draw[-, line width=10pt, draw=white] (5) to (3);
		\draw[-, line width=10pt, draw=white] (4) to (2);
		\draw[->] (5) to node[right]{$\pi$} (3);
		\draw[->] (4) to (2);
		\draw[->] (6) to (4);
		\draw[->] (7) to (5);
	\end{tikzpicture}
	\end{center}
	The bundle $\Pi^*E$ admits a canonical fiberwise metric with invertible Dirac operators which implies that $\Pi^*\ind(E)\in KO^{-d}(\calR_\inv(\pi))$ vanishes. By the cohomological version of the Atiyah--Singer family index theorem we have
	\[\Pi^*\kappa_{\hat \calA_k} = (\mathrm{ch_{2k}}\circ c)(\Pi^*\ind(E))=0\in H^{4k-d}(\calR_\inv(\pi);\bbQ)\]
	where $\mathrm{ch_{2k}}$ denotes the $2k$-th component of the Chern character and $c$  is the complexification map. So the nontrivial class $\kappa_{\hat \calA_k}$ lies in the kernel of 
	\[\Pi^*\colon H^{4k-d}(B;\bbQ)\too H^{4k-d}(\calR_\inv(\pi);\bbQ).\]
	Consider the Serre spectral sequence for the fibration $\calR^{\inv}(\pi)\overset{\Pi}\too B$. The homomorphism $\Pi^*$ agrees with the composition
	\begin{align*}
		H^{4k-d}(B;\bbQ)&\too H^{4k-d}(B;\calH^0(\calR_\inv(M);\bbQ))=E_2^{4k-d,0} \\
			&\too E_{4k-d-1}^{4k-d,0}  \embeds H^{4k-d}(\calR_\inv(\pi);\bbQ).
	\end{align*}
	Since the action of $\pi_1(B)$ on $H^0(\calR_\inv(M);\bbQ)$ is trivial the coefficient system is simple and the first map is injective. So there is an element in the kernel of $E_2^{4k-d,0} \to E_{4k-d-1}^{4k-d,0}$ and hence there exists an $r\in\{2,\dots, 4d-k\}$ such that the  differential $d^r\colon E_r^{4k-d-r,r-1} \to E_{r}^{4k-d,0}$ is nontrivial. The map $H^{4k-d-r}(B;H^{r-1}(\calR_\inv(M);\bbQ)= E_2^{4k-d-r,r-1}\to E_r^{4k-d-r,r-1}\not=0$ is surjective and it follows that $0\not= H^{4k-d-r}(B;H^{r-1}(\calR_\inv(M);\bbQ)$ which enforces $H^{4k-d-r}(B;\bbQ)\not=0$ and $H^{r-1}(\calR_\inv(M);\bbQ)\not=0$.
\end{proof}

\begin{prop}\label{prop:implyingpsc}
	\pref{Theorem}{thm:bew} holds moreover for every nonempty, $\Diffuo(M)$-invariant subset $\calF(M)\subset \calR(M)$ which admits a $\Diffuo(M)$-equivariant, continuous map $\iota\colon\calF(M)\to \calR_\inv(M)$.
\end{prop}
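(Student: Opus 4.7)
The plan is to run the proof of \pref{Theorem}{thm:bew} verbatim with $\calR_\inv(M)$ replaced by $\calF(M)$, using the map $\iota$ to transport the key index-theoretic vanishing. Associated to the principal bundle $Q\to B$, I would form
\[\calF(\pi)\coloneqq Q\underset{\Diffuo(M)}{\times}\calF(M)\xrightarrow{\Pi_\calF}B,\]
which is well-defined by the $\Diffuo(M)$-invariance of $\calF(M)$. The $\Diffuo(M)$-equivariance and continuity of $\iota$ yield a continuous fiberwise map $\tilde\iota\colon \calF(\pi)\to\calR_\inv(\pi)$ over $B$, i.e.\ $\Pi\circ\tilde\iota=\Pi_\calF$.

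The first step is to show that $\Pi_\calF^*\kappa_{\hat\calA_k}(E)=0$ in $H^{4k-d}(\calF(\pi);\bbQ)$. The proof of \pref{Theorem}{thm:bew} establishes that $\Pi^*\kappa_{\hat\calA_k}(E)=0$ because the pullback bundle $\Pi^*E$ carries a tautological fiberwise metric with invertible Dirac operator, so the family index theorem forces the class to vanish. Applying $\tilde\iota^*$ to this identity and using $\tilde\iota^*\circ\Pi^*=\Pi_\calF^*$ gives the desired vanishing.

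Next, I would reduce to the case of trivial coefficients: by the $\calF$-analog of assumption (i), there is a finite cover $p\colon\tilde B\to B$ trivializing the $\pi_1(B)$-action on $H^0(\calF(M))$. Since $p^*\colon H^*(B;\bbQ)\to H^*(\tilde B;\bbQ)$ is injective, hypothesis (ii) persists for the pulled-back bundle, and the Borel construction is natural under pullback, so we may replace $B$ by $\tilde B$. The Serre spectral sequence for the fibration $\calF(M)\to\calF(\pi)\xrightarrow{\Pi_\calF}B$ now has simple rational coefficients, so the edge map $H^{4k-d}(B;\bbQ)\to E_2^{4k-d,0}$ is injective. Because the nonzero class $\kappa_{\hat\calA_k}(E)$ dies in $E_\infty^{4k-d,0}\hookrightarrow H^{4k-d}(\calF(\pi);\bbQ)$, some differential $d_r$ with $r\in\{2,\dots,4k-d\}$ hits the $(4k-d,0)$-position nontrivially. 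Since the edge map $H^{4k-d-r}(B;H^{r-1}(\calF(M);\bbQ))=E_2^{4k-d-r,r-1}\twoheadrightarrow E_r^{4k-d-r,r-1}$ is surjective, this forces $H^{4k-d-r}(B;\bbQ)\ne 0$ and $H^{r-1}(\calF(M);\bbQ)\ne 0$, which is the desired conclusion.

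The argument is essentially formal once $\tilde\iota$ is in place; the only step requiring care is to check functoriality of the Borel construction, so that $\Pi\circ\tilde\iota=\Pi_\calF$ holds and the identity $\Pi_\calF^*\kappa_{\hat\calA_k}(E)=0$ can legitimately be deduced from the corresponding vanishing on $\calR_\inv(\pi)$. This is standard, and no new index theory is needed beyond what is used in \pref{Theorem}{thm:bew}.
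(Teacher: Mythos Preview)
Your proof is correct and follows essentially the same approach as the paper: both construct $\calF(\pi)\to B$ and use the equivariant map $\iota$ to obtain the vanishing $\Pi_\calF^*\kappa_{\hat\calA_k}(E)=0$, after which the Serre spectral sequence argument from \pref{Theorem}{thm:bew} applies verbatim. The only cosmetic difference is that the paper phrases the key step as ``$\Pi_\calF^*E$ admits a fiberwise metric with invertible Dirac operator'' (built from $\iota$) and re-invokes the family index theorem, whereas you equivalently pull back the already established vanishing on $\calR_\inv(\pi)$ along the induced map $\tilde\iota$.
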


\begin{proof}
	We show that any $M$-bundle $S\to X$ that admits a fiberwise metric $(g_x)_{x\in X}$ with $g_x\in \calF(\pi^{-1}(x))$ also admits a fiberwise metric with invertible Dirac operators: If $\alpha_x\colon M\cong\pi^{-1}(x)$ are some identifications of the fibers with $M$, then $\bigl((\alpha_x)_*\iota(\alpha_x^*g_x)\bigr)_{x\in X}$ is a fiberwise metric with invertible Dirac operators. Since $\iota$ is $\Diffuo(M)$-equivariant the above family does not depend on the choice of $\alpha_x$ and is hence well-defined. 
	
	Therefore the bundle $\Pi_\calF^*E\to\calF(\pi)$ analogous to the bundle $\Pi^*E\to\calR_\inv(\pi)$ from above admits a fiberwise metric with invertible Dirac operators and the rest of the proof goes through without change.
\end{proof}

\begin{prop}\label{prop:ricciflow}
	Every $\Diffuo(M)$-invariant subset of $\calF(M)\subset\calR_\psc(M)$ admits a map $\iota$ as in \pref{Proposition}{prop:implyingpsc}. If $M$ does not admit a Ricci-flat metric, the same holds for every nonempty, $\Diffuo(M)$-invariant subset of $\calF(M)\subset\calR_\nnsc(M)$.
\end{prop}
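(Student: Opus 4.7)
The first part is immediate from the Lichnerowicz formula: for any $g \in \calR_\psc(M)$, one has $D_g^2 = \nabla^{\ast}\nabla + \tfrac{1}{4}R_g$ bounded below by a positive constant, so $D_g$ has trivial kernel and is therefore invertible as a self-adjoint elliptic operator on a closed manifold. Hence $\calR_\psc(M) \subseteq \calR_\inv(M)$, and the inclusion is tautologically continuous and $\Diffuo(M)$-equivariant; precomposing with the inclusion $\calF(M) \hookrightarrow \calR_\psc(M)$ gives the desired map $\iota$.

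For the second part, the plan is to use the Ricci flow to continuously deform a metric of nonnegative scalar curvature into one of strictly positive scalar curvature, reducing to the first case. By Hamilton's short-time existence theorem, every $g_0 \in \calR(M)$ admits a unique maximal Ricci flow $(g_t)_{0 \le t < T(g_0)}$, and both the flow and the maximal time $T$ depend continuously (respectively lower semi-continuously) on $g_0$ in the $C^{\infty}$-topology. The evolution equation $\partial_t R_{g_t} = \Delta_{g_t} R_{g_t} + 2|\mathrm{Ric}_{g_t}|^2$, combined with the strong maximum principle, shows that if $R_{g_0} \ge 0$ everywhere, then either $R_{g_t} > 0$ pointwise for every $t > 0$ or $g_0$ itself is Ricci-flat. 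Under the hypothesis that $M$ admits no Ricci-flat metric the second alternative is excluded.

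To extract a map, the plan is to pick a continuous, $\Diffuo(M)$-invariant positive function $\tau\colon\calR_\nnsc(M) \to (0,\infty)$ with $\tau(g_0) < T(g_0)$ for all $g_0$, and set $\iota(g_0) \coloneqq g_{\tau(g_0)} \in \calR_\psc(M)$. Uniqueness of the Ricci flow gives $\Phi^{\ast}(g_t) = (\Phi^{\ast}g_0)_t$ for every $\Phi \in \Diffuo(M)$, so $T$ is automatically $\Diffuo(M)$-invariant and $\iota$ is $\Diffuo(M)$-equivariant as soon as $\tau$ is. Composing with the inclusion $\calR_\psc(M) \hookrightarrow \calR_\inv(M)$ from the first part and restricting to $\calF(M) \subseteq \calR_\nnsc(M)$ then finishes the construction.

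The main technical obstacle is the production of $\tau$. Continuous dependence of the Ricci flow on its initial data in the $C^{\infty}$-topology is standard for quasilinear parabolic systems, so for any $g_0$ one has an open neighborhood on which $T$ is bounded below by some positive constant. Using paracompactness of $\calR_\nnsc(M)$ one covers the space by such neighborhoods and uses a partition of unity to assemble a continuous positive $\tau$ with $\tau < T$; the $\Diffuo(M)$-invariance of $T$ and of the cover allows one to choose this $\tau$ invariantly (e.g.~by averaging or by pulling back along the orbit projection to a $\Diffuo(M)$-invariant quantity). This yields the required equivariant continuous map.
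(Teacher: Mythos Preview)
Your argument is essentially the same as the paper's: Lichnerowicz for the first part, and Ricci flow to push $\calR_\nnsc(M)$ into $\calR_\psc(M)$ for the second, using the evolution equation for $R$ and the absence of Ricci-flat metrics to guarantee strict positivity for $t>0$. The paper simply cites \cite{bgi_ricciflow} for continuous dependence and \cite[Proposition~2.18]{brendle} for the scalar-curvature statement, then sets $\iota(g)=g_{f(g)/2}$ for a continuous invariant $f<T$.

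One word of caution on your last paragraph: the suggestion to obtain $\Diffuo(M)$-invariance of $\tau$ by ``averaging'' does not work as stated, since $\Diffuo(M)$ is non-compact and carries no bi-invariant finite measure; likewise the orbit space $\calR(M)/\Diffuo(M)$ is badly behaved, so pulling back from it is not automatic. The paper is equally brief here, but the clean way through is to note that $T$ itself is $\Diffuo(M)$-invariant (by uniqueness and isometry-preservation of the flow, as you observe), and that there are explicit lower bounds for the existence time in terms of $\sup|\mathrm{Rm}_{g_0}|$, which are manifestly continuous and $\Diffuo(M)$-invariant functions of $g_0$. Using such a bound for $\tau$ sidesteps the partition-of-unity/averaging issue entirely.
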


\begin{proof}
	By the Schr\"odinger--Lichnerowicz formula, the Dirac operator associated to a metric of positive scalar curvature on a closed $\Spin$-manifold is invertible. Since the inclusion $\calF(M)\subset\calR_\psc(M)$ is obviously $\Diffuo(M)$-equivariant, the first claim follows from \pref{Proposition}{prop:implyingpsc}.
	
	For the second claim, it suffices to construct a continuous, $\Diffuo(M)$-equivariant map 
	\[\iota\colon\calR_\nnsc(M)\to\calR_{\psc}(M).\]
	This is done by an argument similar to the one in \cite{schickwraith}. By \cite{bgi_ricciflow} the solution to the Ricci-flow depends continuously on the initial metric with respect to the $C^\infty$-topology on $\calR(M)$ and hence there exists a continuous map $f\colon\calR(M)\to(0,\infty)$ such that for every initial condition $g\in\calR(M)$ the solution to the Ricci flow exists for all $t<f(g)$. The Ricci-flow preserves isometries and therefore $f$ can be chosen to be $\Diffuo(M)$-invariant. Furthermore, if $g$ has nonnegative scalar curvature then for all $t>0$ the metrics $g_t$ have positive scalar curvature for all $0<t<f(g)$ by \cite[Proposition 2.18]{brendle} since $M$ does not admit a Ricci-flat metric. We define the map 
	\[\iota\colon\calR_\nnsc(M)\to\calR_\psc(M),\quad  g\mapsto g_{f(g)/2}.\]
	If $\alpha\colon M\congarrow M$ is a diffeomorphism, then $(\alpha^*g)_{f(\alpha^*g)/2} = (\alpha^*g)_{f(g)/2}=\alpha^*(g_{f(g)/2})$, again because the Ricci flow preserves isometries. Hence $\iota$ is $\Diffuo(M)$-equivariant and continuous.
\end{proof}

\noindent 
Using the bundles constructed in \pref{Section}{sec:bundles} we derive our main result: 

\begin{thm}\label{thm:mainprc}
	Let $p,q,i,j\in\bbN$ be such that $2j<p<4i$ and $2i<q<4j$. Moreover, let $M$ be a $\Spin$-manifold of dimension $(p+q)$ and let $\emptyset\not=\calF(M\#(S^p\times S^q))\subset\calR(M)$ be as in \pref{Proposition}{prop:implyingpsc}. If $4i-p$ and $4j-q$ are both at least $2$, then $H^{m}(\calF(M\#S^p\times S^q);\bbQ)\not=0$ for some $m\in\{4i-p-1,4j-q-1,4(i+j)-(p+q)-1\}$. If $4i-p=1$ and for every $\alpha\in\Diffuo(S^p\times S^q,D^{p+q})$ there exists an $l\in\bbN$ such that $(\alpha^l\#\id)^*\colon \calF(M\#(S^p\times S^q))\to\calF(M\#(S^p\times S^q))$ is homotopic to the identity, then $H^{m}(\calF(M\#S^p\times S^q);\bbQ)\not=0$ holds for some $m\in\{4j-q-1,4j-q\}$. If $4j-q=1$ as well, then $m=1$. 
\end{thm}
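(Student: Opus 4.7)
The plan is to apply \pref{Proposition}{prop:implyingpsc} to the smooth bundle $\pi_M\colon E_M \too B := S^{4i-p}\times S^{4j-q}$ from \pref{Corollary}{cor:bundle-exists}, with $k := i+j$. The inequalities $2j < p < 4i$ and $2i < q < 4j$ imply $p+q < 4(i+j)$, so $k > d/4$ for $d := p+q$, and \pref{Corollary}{cor:bundle-exists} directly gives hypothesis~$(ii)$ of \pref{Theorem}{thm:bew}, since the class $\kappa_{\hat\calA_{i+j}}(\pi_M) \in H^{4(i+j)-(p+q)}(B;\bbQ)$ is nonzero.

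The bulk of the work is to verify hypothesis~$(i)$: that the fiber transport action of $\pi_1(B)$ on $H^0(\calF(M\#(S^p\times S^q));\bbQ)$ factors through a finite group. If $4i-p\geq 2$ and $4j-q\geq 2$, the base $B$ is simply connected, so this is automatic. If $4i-p=1$, \pref{Construction}{constr:bundle} uses a clutching datum $\hat\alpha\colon S^0 \to \Maps((S^p,D^p_-),(SO(q),1))$; its two values assemble into a monodromy diffeomorphism $\alpha_0$ of the fiber $S^p\times S^q\setminus\mathring{D}^{p+q}$, which extends by the identity on the trivialized disk $D^{p+q}$. After closing up (\pref{Lemma}{lem:closing}) and performing the fiberwise connected sum with $M$ inside the trivialized disk bundle of \pref{Corollary}{cor:bundle-exists}, the monodromy around the $S^1$-factor of $B$ becomes $\alpha_0\#\id$ with $\alpha_0 \in \Diffuo(S^p\times S^q,D^{p+q})$. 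The hypothesis of the theorem then furnishes an $l\in\bbN$ with $(\alpha_0^l\#\id)^*$ homotopic to the identity on $\calF(M\#(S^p\times S^q))$, so the monodromy action factors through $\bbZ/l$. If $4j-q=1$ as well, an identical argument applied to the second $S^1$-factor (using the $\hat\beta$-clutching) shows the $\bbZ^2$-action factors through a finite quotient.

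With both hypotheses verified, \pref{Proposition}{prop:implyingpsc} produces an $r \in \{2,\dots,4k-d\}$ with $H^{4k-d-r}(B;\bbQ)\neq 0$ and $H^{r-1}(\calF(M\#(S^p\times S^q));\bbQ)\neq 0$. It then remains to inspect the sparse rational cohomology of $B$, which is concentrated in degrees $0$, $4i-p$, $4j-q$, and $4(i+j)-(p+q)$; the restriction $r\geq 2$ rules out the choice $4k-d-r = 4(i+j)-(p+q)$. In the first case this yields $m := r-1 \in \{4i-p-1,\, 4j-q-1,\, 4(i+j)-(p+q)-1\}$. When $4i-p=1$ and $4j-q\geq 2$ one has $4k-d = 4j-q+1$ and the available degrees of $B$ collapse to $\{0, 1, 4j-q, 4j-q+1\}$, giving $m \in \{4j-q-1,\,4j-q\}$. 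When both $4i-p=4j-q=1$, only $r=2$ remains, forcing $m=1$.

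The main obstacle I anticipate is the monodromy identification in the boundary cases: one must verify carefully that the diffeomorphism arising from the $S^0$-clutching in \pref{Construction}{constr:bundle} genuinely lies in $\Diffuo(S^p\times S^q, D^{p+q})$ and that the fiberwise connected-sum construction of \pref{Corollary}{cor:bundle-exists} preserves this trivialization, so that the hypothesis of the theorem can be fed in via the loop represented by $\alpha_0 \# \id$. Once this is settled, the remainder is a direct application of \pref{Proposition}{prop:implyingpsc} together with an elementary inspection of $H^*(S^a \times S^b;\bbQ)$.
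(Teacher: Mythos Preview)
Your proposal is correct and follows essentially the same route as the paper: invoke \pref{Corollary}{cor:bundle-exists} to obtain the bundle over $S^{4i-p}\times S^{4j-q}$ with nonzero $\kappa_{\hat\calA_{i+j}}$, verify hypothesis~(i) of \pref{Theorem}{thm:bew} (trivially when the base is simply connected, and via the stated hypothesis on $(\alpha^l\#\id)^*$ when one or both factors are $S^1$), and then read off the possible degrees $m=r-1$ from the sparse rational cohomology of the base. The paper's own proof compresses the last two steps into a single sentence, but your more explicit bookkeeping of the monodromy and of the degrees in $H^*(S^{4i-p}\times S^{4j-q};\bbQ)$ is exactly what underlies that sentence.
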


\begin{proof}
	By the assumptions on $p,q,i,j$ there exists a $M\#(S^p\times S^q)$-bundle $E\to S^{4i-p}\times S^{4j-q}$ with a $\Spin$-structure on the vertical tangent bundle and non-vanishing $\kappa_{\hat\calA_k}$ by \pref{Corollary}{cor:bundle-exists}. The claim then follows immediately from \pref{Theorem}{thm:bew} and \pref{Proposition}{prop:implyingpsc}. In the case $4i-p=1$, note that by assumption, the action $\pi_1(B)\actson H^0(\calF(M\#(S^p\times S^q)))$ factors through a finite group, since $\pi_1(B)$ is either given by $\bbZ$ or $\bbZ^2$.
\end{proof}

\begin{rem}
	With the precise statement it is possible to illustrate two more improvements over the result from \cite{bew}: Firstly we can give more precise estimates which rational cohomology groups are possibly nontrivial and secondly it is possible to show that multiple cohomology groups are nontrivial. The space $\calF(W_{1}^{42})$ for example has at least $5$ non-vanishing rational cohomology groups, namely one out of each of $\{2,5\},\;\{6,13\},\;\{10,21\},\;\{14,29\}$ and $\{18,37\}$.
\end{rem}

\noindent 
In order to deduce \pref{Theorem}{thm:main1} and \pref{Theorem}{thm:main2}, we will recall the rational Hurewicz theorem.

\begin{thm} (\cite{Serre}, see also \cite[Theorem 18.3]{MilnorStasheff})
	Let $X$ be a simply connected space with $\pi_i(X)\otimes \bbQ=0$ for $2\le i\le r$. Then the Hurewicz map induces an isomorphism
	\[H\colon\pi_i(X)\otimes\bbQ\too H_i(X;\bbQ)\]
	for $1\le i <2r+1$ and a surjection for $i=2r+1$.
\end{thm}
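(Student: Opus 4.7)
The plan is to reduce to the case of a simply-connected rational space and then proceed by induction along its Postnikov tower, exploiting the sparseness of the rational cohomology of Eilenberg--MacLane spaces.

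First I would rationalize $X$: since $X$ is simply connected, the rationalization $X_\bbQ$ exists and satisfies $\pi_i(X_\bbQ)=\pi_i(X)\otimes\bbQ$, $H_i(X_\bbQ;\bbZ)=H_i(X;\bbQ)$, and the Hurewicz map is natural under rationalization. Hence it suffices to prove the integral Hurewicz statement for $X_\bbQ$, whose homotopy groups vanish in degrees $2\le i\le r$. Applying Serre's mod-$\mathcal{C}$ Hurewicz theorem to $X$ with $\mathcal{C}$ the class of torsion abelian groups then yields $H_i(X;\bbQ)=0$ for $1\le i\le r$, so $X_\bbQ$ is $r$-connected in the usual integral sense.

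Now consider the Postnikov tower $\cdots\to X_n\to X_{n-1}\to\cdots$ of $X_\bbQ$, which starts at $X_r\simeq *$ with $X_{r+1}\simeq K(\pi_{r+1}(X)\otimes\bbQ,\,r+1)$. The key computational input is that for any $\bbQ$-vector space $V$ and any $n\ge 1$, $H^*(K(V,n);\bbQ)$ is the free graded-commutative algebra on $V^*$ placed in degree $n$, so $H^i(K(V,n);\bbQ)=0$ for $0<i<2n$. Since $n\ge r+1$ implies $2n\ge 2r+2>2r+1$, in the Serre spectral sequence of each stage $K(\pi_n(X)\otimes\bbQ,n)\to X_n\to X_{n-1}$ the $E^2$-page is supported only on the two rows $q=0$ and $q=n$ within total degrees $\le 2r+1$.

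Assuming inductively that the Hurewicz statement holds for $X_{n-1}$ in this range --- so in particular $H_i(X_{n-1};\bbQ)=0$ for $n\le i\le 2r+1$ --- the only differential that can affect the range is the transgression $d_{n+1}\colon E_{n+1}^{0,n}\to E_{n+1}^{n+1,0}$; for $n\le 2r$ its target vanishes, and for $n=2r+1$ its target lies outside the range being examined. Reading off $E_\infty$ and using that the fiber edge homomorphism coincides with the Hurewicz map yields the desired isomorphism in degrees $\le 2r$ and surjection in degree $2r+1$ for $X_n$; since $X_\bbQ\to X_{2r+1}$ is a $(2r+2)$-equivalence, the conclusion transfers to $X_\bbQ$ and hence to $X$. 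The main obstacle is checking rigorously that no other differentials interfere in the relevant range and that the edge homomorphism really does identify with the Hurewicz map --- both ultimately follow from standard but slightly delicate spectral-sequence considerations.
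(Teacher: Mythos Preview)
The paper does not prove this theorem; it only recalls the statement with references to Serre and to Milnor--Stasheff and then uses it as a black box in the proof of \pref{Theorem}{thm:main1}. So there is no ``paper's own proof'' to compare against.

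That said, your outline is a standard and essentially correct route to the rational Hurewicz theorem: pass to the rationalization, climb the Postnikov tower, and use that the rational (co)homology of $K(V,n)$ for a $\bbQ$-vector space $V$ is the free graded-commutative algebra on a copy of $V$ in degree $n$, so that in the Serre spectral sequence of each Postnikov stage only the rows $q=0$ and $q=n$ contribute in total degree at most $2r+1$. One slip: your sentence ``$H^i(K(V,n);\bbQ)=0$ for $0<i<2n$'' is not literally true, since $H^n(K(V,n);\bbQ)\cong V^{\ast}$; what you mean (and what you actually use) is that this cohomology vanishes for $0<i<n$ and for $n<i<2n$, which is exactly why only the two rows $q=0$ and $q=n$ appear in the relevant range. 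Your inductive claim that $H_i(X_{n-1};\bbQ)=0$ for $n\le i\le 2r+1$ is fine once you note that the surjectivity at degree $2r+1$ in the inductive hypothesis, together with $\pi_i(X_{n-1})=0$ for $i\ge n$, forces $H_{2r+1}(X_{n-1};\bbQ)=0$ as well when $n\le 2r+1$. The remark that the fiber edge homomorphism agrees with the Hurewicz map is indeed the only point needing care, and it is standard.
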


\begin{proof}[{Proof of \pref{Theorem}{thm:main1}}]
	We start by considering the case $d=7+8k$ for some $k\ge1$. Let us first assume that $\calF(M)$ is simply connected. For $p=4k+3$, $q=4k+4$, $i=k+2=j$, \pref{Theorem}{thm:mainprc} implies that $H^k(\calF(M);\bbQ)\not=0$ for some $k\in\{3,4,8\}$ and by the universal coefficient theorem $H_k(\calF(M);\bbQ)\not=0$. If $\pi_i(\calF(M))\otimes\bbQ=0$ for $i=2,3,4$, the rational Hurewicz theorem enforces $\pi_8(\calF(M))\otimes\bbQ\not=0$. If $\pi_1(\calF(M))$ is finite, consider the universal covering $p\colon\widetilde{\calF(M)}\to\calF(M)$. This yields a transfer map $p_!\colon H^k(\widetilde{\calF(M)};\bbQ)\to H^k(\calF(M);\bbQ)$ with $p_!\circ p^*=\id$ and hence $p_!$ is surjective.	If $\pi_i(\calF(M))\otimes\bbQ=0$ for $i=2,3,4$, the same holds for $\widetilde{\calF(M)}$. Therefore by the same argument as above we get $\pi_8(\calF(M))\otimes\bbQ=\pi_8(\widetilde{\calF(M)})\otimes\bbQ\not=0$. The other cases are completely analogous and the required choices for $p,q,i,j$ are given in the table below for $k\ge1$ (with the exception for $d=13$).
	
	\begin{tabular}{c || c | c | c | c | c | c | l}
		$d$ 	& $p$ 	& $q$ 	& $i$ 	& $j$ 	& $4i-p-1$ 	& $4j-q-1$		& $k\in$\\
		\hline
		$8k+2$	& $4k+1$	& $4k+1$	& $k+1$	& $k+1$	& $2$	& $2$		& $\{2,5\}$\\
		$8k+3$	& $4k+1$	& $4k+2$	& $k+1$	& $k+1$	& $2$	& $1$		& $\{2,4\}$\\
		$8k+4$	& $4k+2$	& $4k+2$	& $k+1$	& $k+1$	& $1$	& $1$		& $\{3\}$\\
		$8k+5$	& $4k+2$	& $4k+3$	& $k+1$	& $k+2$	& $1$	& $4$		& $\{4,6\}$\\
		$8k+6$	& $4k+3$	& $4k+3$	& $k+2$	& $k+2$	& $4$	& $4$		& $\{4,9\}$\\
		$8k+7$	& $4k+3$	& $4k+4$	& $k+2$	& $k+2$	& $4$	& $3$		& $\{3,4,8\}$\\
		$8k+8$	& $4k+4$	& $4k+4$	& $k+2$	& $k+2$	& $3$	& $3$		& $\{3,7\}$\\
		$8k+9$	& $4k+4$	& $4k+5$	& $k+2$	& $k+2$	& $3$	& $2$		& $\{2,3,6\}$\\
	\end{tabular}\\
	
	\noindent 
	Note that the case $d=13$ has to be excluded since it forces $j=2$ and hence $4j-q-1=0$. Therefore the assumption that the action of $\pi_0(\Diffuo(S^6\times S^7,D^{13}))$ on $\pi_0(\calF(S^6\times S^7))$ factors through a finite group is required in this case (cf. \pref{Remark}{rem:dim13}).
\end{proof}

\begin{proof}[Proof of \pref{Theorem}{thm:main2}]
	If the pullback action $\bbZ^2\cong\pi_1(B)\to\pi_0(\Diffuo(W_1^d,D))\actson \pi_0(\calF(W_1^d))$ factors through a finite group, then \pref{Theorem}{thm:mainprc} implies $H^1(\calF(M);\bbQ)\not=0$. If the pullback action does not factor through a finite group, then the orbit of this action gives an infinite family of components of $\calF(W_1^d)$. By \cite[Theorem B]{Frenck} the action is trivial on $\pi_0(\calR_\psc(W_1^d))$ or factors through $\bbZ/2$ depending on the dimension. Hence this infinite family of components of $\calF(W_1^d)$ gets mapped to the the same component by $\iota_\calF$.
\end{proof}

\subsection{The action of $\Diffuo(M)$ on $\calR_\psc(M)$}\label{sec:psc}
Let $F$ be a compact orientable manifold and let us consider an oriented fiber bundle of the form $F\to E\to S^{k}$ for $k\ge0$. This is classified by the homotopy class of a map $\alpha\colon S^{k-1}\to \Diff(F)$ based at the identity by clutching together two trivial $F$-bundles over the disk $D^{k}$. This means $E$ is given by 
\[E = \bigl(D^{k}\times F \amalg D^{k}\times F\bigr)/ (s,f) \sim (s,\alpha(s)(f))\]
for $s\in S^{k-1}$. Assuming that $F$ carries a metric $g_F$ of positive scalar curvature, we get an orbit map $\rho\colon\Diff(F)\to\calR_\psc(F)$ to the space of psc metrics on $F$ via pullback $\rho(f)=f^*g_F$. We have the following well-known observation (see e.g. \cite[Remark 1.5]{HankeSchickSteimle}). The proof consists of constructing a metric on $E$ that has fiberwise positive scalar curvature, shrinking the fibers and using the O'Neill formulas or by an explicit computation.

\begin{prop}\label{prop:criterion}
	If $\rho_*[\alpha] = 0\in\pi_{k-1}(\calR_\psc(F),g_F)$, then $E$ admits a psc-metric.
\end{prop}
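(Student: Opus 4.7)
The plan is to construct a metric of positive scalar curvature on $E$ in two steps: first build a \emph{fiberwise} psc metric using the null-homotopy, then invoke the classical shrinking-fibers argument for Riemannian submersions.

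The hypothesis $\rho_{*}[\alpha]=0$ provides a continuous extension $H\colon D^{k}\to\calR_\psc(F)$ of the map $s\mapsto\alpha(s)^{*}g_{F}$ on $S^{k-1}$, which we may take to be smooth after a standard smoothing argument. On the first copy of $D^{k}\times F$ in the clutching presentation of $E$, use the fiberwise metric $H(s)$; on the second copy, use the constant fiberwise metric $g_{F}$. Under the identification $(s,f)\sim(s,\alpha(s)(f))$ for $s\in S^{k-1}$, a vertical tangent vector $v\in T_{f}F$ in the first copy is glued to $\alpha(s)_{*}v\in T_{\alpha(s)(f)}F$ in the second copy, so compatibility of the two families along $S^{k-1}\times F$ amounts to $H(s)=\alpha(s)^{*}g_{F}$---precisely the defining property of $H$. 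This produces a smooth metric $g^{v}$ on the vertical tangent bundle $T_{\pi}E$ whose restriction to every fiber has positive scalar curvature.

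Next, choose an Ehresmann connection on $\pi\colon E\to S^{k}$ and any Riemannian metric $g_{S^{k}}$ on the base, and declare the horizontal and vertical distributions orthogonal. For $\epsilon>0$, set $g_{E}^{\epsilon}$ to equal $\epsilon^{2}g^{v}$ along the vertical direction and $\pi^{*}g_{S^{k}}$ along the horizontal direction, so that $\pi$ is a Riemannian submersion. The O'Neill formulas then give
\[
\mathrm{scal}(g_{E}^{\epsilon})=\epsilon^{-2}\,\mathrm{scal}^{v}+C(\epsilon),
\]
where $\mathrm{scal}^{v}$ is the scalar curvature of the fiber restrictions of $g^{v}$ and $C(\epsilon)$ collects $\mathrm{scal}(g_{S^{k}})\circ\pi$ together with quadratic expressions in the O'Neill integrability and second-fundamental-form tensors of the submersion, all of which remain uniformly bounded on the compact manifold $E$ as $\epsilon\to 0$. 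Since $\mathrm{scal}^{v}$ is bounded below by a positive constant (compactness of $D^{k}$ combined with continuity of $\mathrm{scal}\colon\calR(F)\to C^{\infty}(F)$), the fiberwise term dominates for all sufficiently small $\epsilon$, giving $\mathrm{scal}(g_{E}^{\epsilon})>0$.

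The substantive steps are the compatibility check in the clutching construction and the uniform positive lower bound on $\mathrm{scal}^{v}$; the former is forced by the defining property of $H$, and the latter is immediate from continuity and compactness. Everything else is the standard shrinking-fiber asymptotics for Riemannian submersions, which is where the main technical work (unpacking O'Neill's formulas) actually lives but carries no surprises.
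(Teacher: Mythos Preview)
Your proof is correct and follows exactly the approach the paper indicates: the paper's own proof is a one-line sketch (``constructing a metric on $E$ that has fiberwise positive scalar curvature, shrinking the fibers and using the O'Neill formulas or by an explicit computation''), and you have filled in precisely those details---using the null-homotopy $H$ to produce the fiberwise psc metric via the clutching description, then shrinking fibers in a submersion metric. The only point worth tightening is the smoothness of the glued fiberwise metric at the seam $S^{k-1}\times F$: matching $H(s)=\alpha(s)^{*}g_{F}$ on the boundary gives a $C^{0}$ match, so you should first homotope $H$ to be radially constant near $\partial D^{k}$ (a standard collar argument), which makes the glued metric genuinely smooth.
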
 

\noindent 
Now let $S^p\times S^q\to E\overset\pi\to S^{4i-p}\times S^{4j-q}$ be the bundle from \pref{Section}{sec:bundles}. Let $M\coloneqq \pi^{-1}(S^{4i-p}\times\{1\})$. Then $E$ is an $M$-bundle over $S^{4j-q}$ with a $\Spin$-structure on the vertical tangent bundle, non-vanishing $\hat A$-genus and hence no metric of positive scalar curvature. In order to apply \pref{Proposition}{prop:criterion} we need the existence of a positive scalar curvature metric on $M$ which is guaranteed by the following Lemma. 

\begin{lem}\label{lem:psc-existence}
	Let $F'\to F^d\to S^n$ be an oriented smooth fiber bundle with simply connected, stably parallelizable fiber $F'$ that admits a metric of positive scalar curvature. If $F$ is non-spinnable or if $F$ is $\Spin$ and $d\not\equiv1,2\ (\text{mod } 8)$, then $F$ admits a metric of positive scalar curvature as well. If $F$ is $\Spin$ and $d\equiv1,2\ (\text{mod }  8)$, then a sufficient condition for $F$ to admit a positive scalar curvature metric is that the element $\alpha \in \pi_n(\BDiff(F'))$ classifying the bundle is divisible by $2$.
\end{lem}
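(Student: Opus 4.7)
My plan is to apply Stolz's theorem on the existence of positive scalar curvature metrics on simply connected spin manifolds of dimension at least $5$, which reduces the question to vanishing of the $\alpha$-invariant $\alpha(F)\in KO_d$, together with the classical Gromov--Lawson result that every simply connected non-spin manifold of dimension at least $5$ admits a psc metric. First I would verify $F$ is simply connected: the long exact sequence of homotopy groups for $F' \to F \to S^n$, using $\pi_1(F')=0$ by assumption and $\pi_1(S^n)=0$ for $n\geq 2$, yields $\pi_1(F)=0$. In the non-spin case Gromov--Lawson then immediately produces a psc metric; otherwise I need to show $\alpha(F)=0\in KO_d$.

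The argument proceeds by case analysis on $d\bmod 8$. For $d\equiv 3,5,6,7\pmod 8$ one has $KO_d=0$ and there is nothing to check. For $d\equiv 0\pmod 4$, $\alpha(F)$ is a nonzero rational multiple of $\hat A(F)$, so it suffices to prove $\hat A(F)=0$. Here I would use the Serre spectral sequence of $F'\to F\to S^n$ (with simple coefficients, as $F'$ is simply connected) combined with stable parallelizability of $F'$: the latter forces $p_i(T_\pi F)\vert_{F'}=0$ for each $i\geq 1$, hence $p_i(T_\pi F)\in \pi^*H^n(S^n)\cdot H^{4i-n}(F)$. Since $(\pi^*\omega_{S^n})^2=0$, any product of two such classes vanishes in $H^*(F)$; hence $\hat{\calA}_{d/4}(T_\pi F)$ collapses to a scalar multiple of the single top Pontryagin class $p_{d/4}(T_\pi F)$, and $\hat A(F)$ to a scalar multiple of $\langle p_{d/4}(T_\pi F),[F]\rangle$. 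The vanishing of this remaining Pontryagin number then follows from either the Atiyah--Singer family index theorem applied to a fiberwise psc metric (trivializing the family index) or an obstruction argument showing $T_\pi F$ is stably pulled back from $S^n$, so that $p_{d/4}$ originates from $H^d(S^n)=0$.

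For $d\equiv 1,2\pmod 8$, where $KO_d\cong \bbZ/2$, I would exploit that the assignment sending a bundle's classifying map $\alpha$ to the $\alpha$-invariant of its total space factors as a group homomorphism $\pi_n(\BDiff(F'))\to \Omega_d^{\Spin}\to KO_d$: fiberwise connected sum of bundles corresponds to disjoint union in $\Spin$-bordism, and the trivial bundle's total space $F'\times S^n$ is null-bordant (as $[S^n]=0\in \Omega_n^{\Spin}$ and $\Spin$-bordism is a ring). If $\alpha$ is divisible by $2$, its image becomes $2\cdot(\text{something})=0\in \bbZ/2$, so $\alpha(F)=0$ and $F$ admits psc by Stolz.

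The main obstacle is the verification that $\langle p_{d/4}(T_\pi F),[F]\rangle=0$ in the $d\equiv 0\pmod 4$ case. The Serre spectral sequence cheaply disposes of the quadratic and higher terms in $\hat{\calA}_{d/4}$, but controlling the top Pontryagin class itself requires exploiting stable parallelizability of $F'$ more globally than merely on fibers. For sphere bundles this is transparent, since $T_\pi F\oplus \underline{\bbR}\cong \pi^*V$ forces all Pontryagin classes to be pullbacks from $H^*(S^n)$, with $p_{d/4}\in H^d(S^n)=0$; the task is to give an obstruction-theoretic argument ensuring the analogue holds for any stably parallelizable simply connected fiber.
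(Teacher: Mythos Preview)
Your overall strategy---Gromov--Lawson for the non-spin case, Stolz for the spin case, then a case analysis on $d \bmod 8$---matches the paper's. The $d \equiv 1,2 \pmod 8$ argument via a homomorphism $\pi_n(\BDiff(F')) \to KO_d \cong \bbZ/2$ is also essentially the paper's, though the paper is more careful: to land in $\Omega^{\Spin}_d$ one needs the vertical tangent bundle to carry a spin structure varying with the bundle, so the paper works with $\BDiffuo^{\Spin}(F')$ and invokes that $\pi_n\BDiffuo^{\Spin}(F') \to \pi_n\BDiff(F')$ is an isomorphism for $n \ge 2$ when $F'$ is simply connected.

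There are, however, two genuine gaps.

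\textbf{The case $n=1$ is missing.} Your argument that $\pi_1(F)=0$ uses $\pi_1(S^n)=0$, which fails for $n=1$; indeed $F$ is then typically not simply connected, so neither Gromov--Lawson nor Stolz applies. The paper treats this case separately, invoking that the mapping class group of $F'$ acts on $\pi_0(\calR_\psc(F'))$ through a group of order at most $2$, so (possibly after squaring the clutching diffeomorphism) the mapping torus admits a psc metric.

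\textbf{The vanishing of $\langle p_{d/4}(TF),[F]\rangle$ is not established.} Your Serre spectral sequence reduction to this single Pontryagin number is correct and elegant, but neither of your proposed approaches closes the gap. Approach (a) needs a \emph{fiberwise} psc metric, i.e.\ a section of the $\calR_\psc(F')$-bundle over $S^n$; the mere existence of a psc metric on $F'$ does not furnish one, since the obstruction lies in $\pi_{n-1}(\calR_\psc(F'))$, which can be nonzero. Approach (b) asks that $T_\pi F$ be stably pulled back from $S^n$; you verify this only for linear sphere bundles, and for general stably parallelizable $F'$ the clutching transition of the stabilised vertical tangent bundle is a class in $[S^{n-1}\times F',\,O]$ whose $[S^{n-1}\wedge F',\,O]$-component has no evident reason to vanish. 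The paper sidesteps this entirely: it quotes the $\hat A$-multiplicativity result for bundles with stably parallelizable fibre (Hanke--Schick--Steimle), giving $\hat A(F)=\hat A(F')\cdot\hat A(S^n)=0$ directly, without isolating $p_{d/4}$.
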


\begin{proof}
	For $n=1$ the Lemma follows in both cases from \cite{Frenck} as the mapping class group acts trivially on $\pi_0(\calR_\psc(F'))$ or factors through $\bbZ/2$ depending on the dimension. Hence the mapping torus admits a psc-metric under the named assumptions.  For $n\ge2$ we note that $F$ is simply connected and thus admits a psc-metric if it is either non-spinnable, or $\Spin$ and the $\alpha$-invariant of $F$ vanishes (cf. \cite{gromovlawson_classification} and \cite{stolz_simplyconnected}). This finishes the proof in the case that $F$ is non-spinnable, so let $F$ be $\Spin$ from now on. If $d\not\equiv1,2\ (\text{mod } 8)$, the $\alpha$-invariant agrees with the $\hat A$-genus which vanishes because stably parallelizable manifolds are $\hat A$-multiplicative by \cite[Proposition 1.9]{HankeSchickSteimle}. For the case $d\equiv1,2\ (\text{mod } 8)$ let $B\!\Diffuo^{\Spin}(F')$ denote the classifying space for $F'$-bundles with a $\Spin$-structure on the vertical tangent bundle (see \cite[Section 3.3]{ebert_characteristic} or \cite[Section 3.3]{Frenck} for a more detailed discussion.). By \cite[Lemma 3.3.6]{ebert_characteristic} the map $B\!\Diffuo^{\Spin}(F')\to B\!\Diff(F')$ induces an isomorphism on $\pi_n$ for $n\ge2$ because $F'$ is simply connected and hence has a unique $\Spin$-structure for a given orientation. Then $\alpha(F) \in KO^{-d}(*) \cong \bbZ/2\bbZ$ vanishes because of the given condition since the canonical map $\pi_n (\BDiffuo^{\Spin}(F')) \to \Omega^{\Spin}_n (\BDiffuo^{\Spin}(F')) \to \Omega^{\Spin}_{d}$ is a homomorphism. 
\end{proof}

\noindent
\pref{Theorem}{thm:main3} now follows immediately from \pref{Proposition}{prop:spin-structures}, \pref{Proposition}{prop:criterion} and \pref{Lemma}{lem:psc-existence}.

\begin{rem}
	\pref{Theorem}{thm:main3} holds for every Riemannian condition that is satisfied by $M$ and satisfies the hypothesis of \pref{Proposition}{prop:implyingpsc}. Therefore, having a more explicit construction compared to the one from \cite{HankeSchickSteimle} could yield non-triviality results for the induced map $\pi_k(\Diffuo(M))\otimes\bbQ\to\pi_k(\calR_C(M))\otimes\bbQ$.
\end{rem}

\subsection*{Acknowledgements} We thank Johannes Ebert and Manuel Krannich for fruitful discussions. We are also grateful to Philipp Reiser and especially Manuel Krannich for comments on a draft of this paper and to Anand Dessai for helpful remarks pertaining to the Ricci flow. Finally, we thank the anonymous referee for advice that strengthened our results and improved the presentation in the introduction.

\bigskip
\printbibliography
\bigskip
\end{document}